\newtheorem{thm}{Theorem}[section]
\newcommand{\bt}{\begin{thm}}
\newcommand{\et}{\end{thm}}
\newtheorem{cor}[thm]{Corollary} 
\newcommand{\bc}{\begin{cor}}
\newcommand{\ec}{\end{cor}}
\newtheorem{lem}[thm]{Lemma}  
\newcommand{\bl}{\begin{lem}}
\newcommand{\el}{\end{lem}}
\newtheorem{prop}[thm]{Proposition}
\newcommand{\bp}{\begin{prop}}
\newcommand{\ep}{\end{prop}}
\newtheorem{defn}[thm]{Definition}
\newcommand{\bd}{\begin{defn}}  
\newcommand{\ed}{\end{defn}}
\newtheorem{rmrk}[thm]{Remark}
\newcommand{\br}{\begin{rmrk}}
\newcommand{\er}{\end{rmrk}}
\newcommand{\be}{\begin{equation}}
\newcommand{\ee}{\end{equation}}
\newcommand{\R}{\mathbb{R}}
\newcommand{\Ric}{\operatorname{Ric}}
\newcommand{\sphere}{\mathbb{S}}
\newcommand{\diam}{\operatorname{diam}}
\newcommand{\vol}{\operatorname{vol}}
\newcommand{\dvol}{~\mathrm{d}vol}
\newcommand{\dsigma}{{~\mathrm{d}}\sigma}
\newcommand{\dr}{{~\mathrm{d}}r}
\newcommand{\dx}{{~\mathrm{d}}x}
\newcommand{\dy}{{~\mathrm{d}}y}
\newcommand{\n}{\textbf{n}}
\newcommand{\tr}{\operatorname{tr}}
\newcommand{\supp}{\operatorname{supp}}
\newcommand{\Rm}{\operatorname{Rm}}
\newcommand{\Lip}{\operatorname{Lip}}
\newcommand{\del}{\partial}
\newcommand{\pr}{\textrm{pr}}
\newcommand{\Haus}{\mathcal{H}}
\newcommand{\Id}{\mathrm{Id}}
\newcommand{\D}{\mathcal{D}}
\newcommand{\A}{\mathcal{A}}
\newcommand{\AK}{\mathcal{AK}}
\newcommand{\MM}{{M_1 \sqcup M_2}}
\begin{document}
\title[Neckpinch and Optimal Transport]{On the Size of a Ricci Flow Neckpinch via Optimal Transport}
\author{Sajjad Lakzian}
\address[Sajjad Lakzian]{HCM, Universit\"{a}t Bonn}
\email{SLakzian@gc.cuny.edu}
\author{Michael Munn}
\address[Michael Munn]{University of Missouri}
\email{munnm@missouri.edu}

\begin{abstract}
In this paper we apply techniques from optimal transport to study the neckpinch examples of Angenent-Knopf which arise through the Ricci flow on $\mathbb{S}^{n+1}$. In particular, we recover their proof of 'single-point pinching' along the flow i.e. the singular set has codimension 1. Using the methods of optimal transportation, we are able to remove the assumption of reflection symmetry for the metric. Our argument relies on the heuristic for weak Ricci flow proposed by McCann-Topping which characterizes super solutions of the Ricci flow by the contractivity of diffusions. 
\end{abstract}

\maketitle

\section{Introduction}
Given a closed Riemannian manifold $(M^n,g)$, a smooth family of Riemannian metrics $g(t)$ on $M^n$ is said to evolve under the Ricci flow \cite{Hamilton1982} provided
\be
\label{RF-eq}
\begin{cases}
	\dfrac{\partial}{\partial t} g(t) = -2\Ric (g(t))\\
	g(0) = g.
\end{cases}
\ee
The uniqueness and short time existence of solutions was shown by Hamilton \cite{Hamilton1982} (see also DeTurck \cite{DeTurck}). Since its introduction, the Ricci flow has become one of the most intensely studied geometric flows in the literature and understanding the formation of singularities of the Ricci flow plays a particularly important role. A finite time singularity occurs at some $T<\infty$ implies 
\[
\lim_{t \nearrow T} \max_{x\in M^n}|\Rm(x,t)| = \infty.
\]
By the maximum principle, it follows that a singularity will develop in finite time once the scalar curvature becomes everywhere positive. A simple example of this can be seen for the canonical round unit sphere $\mathbb{S}^{n+1}$ which collapses to a point along the flow at $T=\frac{1}{2n}$. \\

While the shrinking sphere describes a global singularity, the `neckpinch' examples we are concerned with in this paper are local singularities; i.e.  they occur on a compact subset of the manifold while keeping the volume positive. Intuitively, a manifold shaped like a dumbbell develops a finite-time local singularity as the neck part of the dumbbell contracts. The first rigorous examples of such a singularity were constructed by Angenent-Knopf \cite{AK1} who produced a class of rotationally symmetric initial metrics on $\mathbb{S}^{n+1}$, for $n\geq 2$, which develop local Type-I neckpinch singularities through the Ricci flow (examples for non-compact manifolds did already exist \cite{Simon-Pinch,Feldman-Ilmanen-Knopf}).  In a follow up paper, assuming in addition that the metric is reflection-invariant and that the diameter remains bounded throughout the flow, Angenent-Knopf \cite{AK2} show that the `neckpinch'  singularity occurs precisely at the equator, on the totally geodesic hypersurface $\{0\} \times \sphere^n$. \\

The purpose of this paper is to reframe part of their results using the methods of optimal transportation and provide a separate proof of this single-point pinching which holds without the condition that the metric be reflection invariant. \\ 

Writing $\mathbb{S}^{n+1}$ as a warped product, an $SO(n+1)$-invariant metric on  $(-1,1) \times \sphere^n$ can be written (using a more geometric variable $r$) as
\[
g= dr^2 + \psi^2(r) g_{can},
\]
where $g_{can}$ is the standard round metric on $\sphere^n$ and where $r$ represents the distance from the equator. Note that $\psi$ must satisfy certain boundary conditions to ensure smoothness of the metric at the endpoints. For a time-dependent family of such metrics $g(t)$ which are solutions to the Ricci flow (\ref{RF-eq}), 
the quantity $\psi(r,t) >0$ may be regarded as the radius of the hypersurface $\{r \} \times \sphere^n$ at the time $t$. We show
\begin{thm}
\label{thm-main-1}
Let $g(t)$ be a family of smooth metrics on $\sphere^{n+1}$ which satisfies (\ref{RF-eq}) for $t \in [0,T)$ and which develops a neckpinch singularity at $T < \infty$ as prescribed by Angenent-Knopf (see Section \ref{Neckpinch prelim}). If the diameter $(\sphere^{n+1}, g(t))$ remains  bounded as $ t \nearrow T$, then the singularity occurs only on a totally geodesic hypersurface  of $\{x_0\} \times \sphere^{n}$, for some $x_0 \in (-1,1)$. 
\end{thm}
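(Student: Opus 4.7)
The plan is to apply the McCann--Topping characterization of Ricci flow via Wasserstein contractivity of diffusions, combined with the $SO(n+1)$-symmetry of the Angenent--Knopf setup, which reduces the optimal transport problem to a weighted one-dimensional problem on $(-1,1)$ where the formation of a second pinch becomes a detectable obstruction.

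First I would set up the one-dimensional reduction. Since the Ricci flow, the (conjugate) heat equation, and the Wasserstein functional are all equivariant under the isometry group, one may work exclusively with rotationally symmetric probability measures. Every such $\mu$ on $(\sphere^{n+1}, g(t))$ is determined by its pushforward $\bar\mu$ on $(-1,1)$ under the projection $(r,\theta)\mapsto r$, with density carrying the warp factor $\psi^n(r,t)$. Because any optimal transport plan between two rotationally symmetric measures is itself equivariant and hence sends meridians to meridians, one obtains $W_2^{g(t)}(\mu_1,\mu_2) = W_2^{(-1,1)}(\bar\mu_1,\bar\mu_2)$ with respect to the Euclidean cost on $(-1,1)$, and the heat equation reduces to a weighted one-dimensional parabolic equation for a profile $f(r,t)$.

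Next I would argue by contradiction: suppose $\psi(\cdot,T)$ vanishes at two distinct points $x_1<x_2$. I would build two families $\rho_1^k,\rho_2^k$ of $SO(n+1)$-invariant solutions of the (conjugate) heat equation along the Ricci flow, by prescribing data at times $t_k\nearrow T$ that sharply approximate $\delta$-measures on $\{x_1\}\times\sphere^n$ and $\{x_2\}\times\sphere^n$. By the McCann--Topping theorem, $t \mapsto W_2^{g(t)}(\rho_1^k(t),\rho_2^k(t))$ is monotone on $[0,t_k]$. Near the singular time, the one-dimensional reduction forces this distance to approach $x_2-x_1>0$ as $k\to\infty$, since the 1D pushforwards concentrate at distinct points of $(-1,1)$. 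At a fixed early time $t_0<T$ where the flow is still smooth, however, the regularizing effect of the heat flow should drive $\rho_1^k(t_0)$ and $\rho_2^k(t_0)$ toward a common limit as $k\to\infty$, sending $W_2^{g(t_0)}(\rho_1^k(t_0),\rho_2^k(t_0))\to 0$. These two limiting behaviors are incompatible with the McCann--Topping monotonicity, yielding the required contradiction.

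The main obstacle I expect is this uniform regularization step. The heat equation is being integrated against a geometry that degenerates as $t\to T$, with scalar curvature blowing up near each pinch, so classical parabolic smoothing estimates do not apply directly; one needs a Harnack-type estimate along the Ricci flow that is uniform in $k$, together with careful tracking of how the weight $\psi^n(r,t)$ near $x_1$ and $x_2$ interacts with the backward diffusion. The diameter hypothesis enters essentially in two ways here: it pins the 1D pushforwards to a common bounded interval of $(-1,1)$, ensuring continuity of $W_2^{(-1,1)}$ up to the singular time, and it prevents mass from escaping and keeps the comparison of the two sequences meaningful in the limit $k\to\infty$. Once single-point pinching at some $x_0\in(-1,1)$ is established, the totally geodesic property of $\{x_0\}\times\sphere^n$ follows because $\psi(\cdot,T)$ attains its interior minimum value $0$ at $x_0$, forcing $\psi'(x_0,T)=0$ and hence vanishing of the second fundamental form of that hypersurface.
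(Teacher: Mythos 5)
Your overall strategy---exploiting the $SO(n+1)$-symmetry to reduce to a one-dimensional transport problem and invoking the McCann--Topping Wasserstein-contractivity characterization to contradict a diameter bound---is precisely the strategy of the paper, and your choice of near-Dirac diffusions centered at the putative pinch locations mirrors theirs. But the step you yourself flag as ``the main obstacle'' is in fact a genuine and, as stated, irreparable gap. You claim that at a fixed early time $t_0 < T$ the two conjugate-heat diffusions $\rho_1^k(t_0)$ and $\rho_2^k(t_0)$ merge, so $W_2^{g(t_0)}\bigl(\rho_1^k(t_0),\rho_2^k(t_0)\bigr)\to 0$ as $k\to\infty$. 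That cannot be right: as $t_k\nearrow T$ the backward diffusion time $t_k - t_0$ remains bounded by $T-t_0$, and the conjugate heat kernels $K^*(\cdot,x_1;t_0,t_k)$ and $K^*(\cdot,x_2;t_0,t_k)$ converge to two \emph{distinct} smooth positive densities on $(\sphere^{n+1},g(t_0))$ whose $W_2$ distance is a fixed positive number. The degenerating geometry near the neck does not help here either; the $-Ru$ term in the conjugate heat equation acts as a sink, pushing mass away from the high-curvature neck toward the respective polar caps, which tends to keep the two diffusions separated rather than merge them. Without a mechanism making $W_2\to 0$ at $t_0$, your contradiction does not close.

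The paper avoids this issue by closing the contradiction on the \emph{other} side of the singularity. Using Angenent--Caputo--Knopf's smooth forward evolution, they continue the flow for $t\in(T,T')$ and model $(\sphere^{n+1},g(t))$ near $T$ as two degenerate caps $M_1$, $M_2$ joined by an interval of length $L(t)$. They then introduce a weak (purely metric-measure) notion of conjugate-heat diffusion via a Trotter--Chernov product formula, and prove a support-confinement lemma showing that diffusion cannot cross the bridge interval; this lets them decompose the $W_1$ distance (rather than $W_2$, so that the one-dimensional representation $W_1 = \int |F-G|$ applies cleanly) between two diffusions concentrated on opposite caps as $W_1 = L(\tau) + \int F\,\dr + \int G\,\dr$. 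Their Proposition 3.2 shows, via an explicit choice of cut-off profile, that $\partial_\tau \int F\,\dr$ can be made arbitrarily large when the diffusion is sufficiently concentrated; McCann--Topping monotonicity of $W_1$ then forces $\partial_\tau L$ to be arbitrarily negative near $\tau = T'-T$, which is incompatible with a bounded diameter. So the contradiction is a quantitative differential inequality for the rate of change of $W_1$ near the singular time, not a limiting statement at a fixed earlier time. If you want to salvage your approach you would either need to adopt their post-singular continuation and rate-of-change estimate, or establish a genuine uniform-in-$k$ smoothing estimate strong enough to merge the two diffusions at $t_0$---but the latter appears to be false, not merely hard.
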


The original result of Angenent-Knopf assumes reflection symmetry of the metric in addition to  a diameter bound to prove equatorial pinching (see Section 10 of \cite{AK1}). Later, they verify in \cite{AK2} that reflection symmetry in fact implies the diameter bound. Their proof relies on careful analysis and detailed computations arising from the imposed evolution equations on the profile warping function $\psi(r,t)$. In Section \ref{diam estimates} we provide an alternate proof of this `one-point pinching' result. Our method of proof involves  techniques of optimal transportation and, as such, avoids the requirement of reflection symmetry, though we still require the condition on the diameter bound.\\

This paper is organized as follows:  In Section \ref{prelims}, we review relevant background material and set the notation we will use throughout the paper. In particular, we begin in Section \ref{Neckpinch prelim} by discussing the formation of neckpinch singularities through the Ricci flow and the results of Angenent-Knopf mentioned above. In Section \ref{OT prelim} we recall the basic ideas of optimal transportation and in Section \ref{sec-OT-Rot} we study the optimal transport problem in the presence of rotational symmetry. Sections \ref{RF and OT prelim} and \ref{sec-conj heat kernel} are devoted to the metric-measure characterization of diffusions under Ricci the flow and related work of McCann-Topping. In Section \ref{sec-neckpinch control} we give a proof of main theorem and briefly mention how these ideas can be generalized to address more general neckpinch singularities.

\section*{Acknowledgements.}
The authors would like to thank Christina Sormani who originally proposed the study of Ricci flow neckpinch using metric geometry. Also SL is very grateful to Dan Knopf for many insightful conversations and Professor K-T Sturm for giving him the opportunity of working as a postdoctoral fellow at the Hausdorff Center for Mathematics at the University of Bonn.\\

 SL was supported by the Hausdorff Center for Mathematics at the University of Bonn and in part by the National Science Foundation under DMS-0932078 000 while in residence at the Mathematical Science Research Institute in Berkeley,
California, during the Fall of 2013. MM was partly supported by the NSF under the grant OISE-0754379 and thanks the Mathematics Institute at the University of Warwick, where part of this work was done.

\section{Preliminaries}
\label{prelims}
\subsection{Neckpinch Singularities of the Ricci Flow} 
\label{Neckpinch prelim}
Singularities of the Ricci flow can be classified according to how fast they are formed. A solution $(M^n, g(t))$ to (\ref{RF-eq}) develops a Type I, or rapidly forming, singularity at $T < \infty$, if 
\be 
\sup_{M\times [0, T)} (T-t)|\Rm (\cdot, t)| < +\infty.
\ee
Hamilton showed  \cite{Hamilton1982} that such singularities arise for compact 3-manifolds with positive Ricci curvature. Later, B\"{o}hm-Wilking  \cite{BW2008} extended this result to all dimensions showing that any compact manifold with positive curvature operator must develop a Type I singularity in finite time. \\

A solution $(M^{n+1}, g(t))$ of the Ricci flow is said to develop a \textit{neckpinch singularity} at some time $T < \infty$ through the flow by pinching an almost round cylindrical neck. More precisely, there exists a time-dependent family of proper open subsets $N(t) \subset M^{n+1}$ and diffeomorphisms $\phi_t: \R \times \sphere^n \to N(t)$ such that $g(t)$ remains regular on $M^{n+1} \setminus N(t)$ and the pullback $\phi_t^*\left(\left.g(t)\right|_{N(t)}\right)$ on $\R \times \sphere^n$ approaches the ``shrinking cylinder'' soliton metric
\[
ds^2 + 2(n-1)(T-t) g_{can}
\]
in $\mathcal{C}^{\infty}_{loc}$ as $t \nearrow T$, where $g_{can}$ denotes the canonical round metric on the unit sphere $\sphere^n(1) \subset \mathbb{R}^{n+1}$.\\

Following \cite{AK1}, consider $\sphere^{n+1}$ and remove the poles $P_{\pm}$ to identify $\sphere^{n+1} \setminus{P_{\pm}}$ with $(-1,1) \times \sphere^n$. An SO($n+1$)-invariant metric on $\mathbb{S}^{n+1}$ can be written as 
\be 
\label{initial rot sym g} 
g = \phi(x)^2 (dx)^2 + \psi(x)^2 g_{can},
\ee
where $x \in (-1,1)$. Letting $r$ denote the distance to the equator given by 
\be 
\label{r defn} 
r(x) = \int_0^x \phi(y) dy,
\ee
one can rewrite (\ref{initial rot sym g}) more geometrically as a warped product 
\[
g = (dr)^2 + \psi(r)^2 g_{can}.
\]
To ensure smoothness of the metric at the poles, we require $\lim_{x\to \pm 1} \psi_r = \mp 1$ and $\phi/(r_{\pm}-r)$ is a smooth even function of $r_{\pm}-r$, where $r_{\pm} :=r(\pm1)$. This assumption of rotational symmetry on the metric allows for a simplification of the full Ricci flow system from a nonlinear PDE into a scalar parabolic PDE in one space dimension.\\

Throughout, we keep in mind that $r$ depends on $x$ and ultimately, for evolving metrics, both $r$ and $\psi$ will depend on both $x$ and $t$.  In particular, 
\[
\frac{\partial}{\partial r} = \frac{1}{\phi(x)} \frac{\partial }{\partial x}
\]
and $dr = \phi(x) dx$, even when the metric evolves. 
\\

In \cite{AK1}, the authors establish  the existence of Type I neckpinch singularities for an open set of initial $SO(n+1)$-invariant  metrics on $\sphere^{n+1}$ which also have the following properties: 
\begin{enumerate}[(i)]
\item positive scalar curvature 
\item positive sectional curvature on the planes tangential to $\{x\} \times \sphere^n$
\item ``sufficiently pinched'' necks; i.e. the minimum radius should be sufficiently small relative to the maximum radius (c.f. Section 8 of \cite{AK1})
\end{enumerate}
Throughout this paper, let $\AK$ denote this open subset of initial metrics on $\sphere^{n+1}$ and we refer to the neckpinch singularities which arise from these initial metrics as {\em neckpinches of Angenent-Knopf type}. Furthermore, let $\AK_0 \subset \AK$ denote those metrics in $\AK$ which are also reflection symmetric (i.e. $\psi(r, 0) = \psi(-r,0)$).  \\

Briefly summarizing some of the results of Angenent-Knopf found in \cite{AK1,AK2} mentioned above, they show

\begin{thm}
\label{thm-AK12}
{\em (c.f. Section 10 in \cite{AK1}, Lemma 2 in \cite{AK2})}
Given an initial metric $g_0 \in \AK$, the solution $(\sphere^{n+1}, g(t))$ of the Ricci flow becomes singular, developing a neckpinch singularity, at some $T < \infty$. Furthermore, provided $g_0 \in \AK_0$, its diameter remains bounded for all $t \in [0, T)$ and the singularity occurs only on the totally geodesic hypersurface $\{0\} \times \sphere^n$. 
\end{thm}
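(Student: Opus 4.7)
The plan is to follow the program of Angenent-Knopf by exploiting the rotational symmetry to reduce (\ref{RF-eq}) to a scalar parabolic PDE for the warping function $\psi$, namely
\[
\psi_t = \psi_{rr} - (n-1)\frac{1 - \psi_r^2}{\psi},
\]
and then to run a sequence of maximum-principle arguments on $\psi$, $\psi_r$, and related one-dimensional quantities. First, since the initial metric has $R > 0$ by (i) and positivity of scalar curvature is preserved under the Ricci flow, the evolution $\partial_t R = \Delta R + 2|\Ric|^2$ combined with the algebraic inequality $|\Ric|^2 \geq R^2/(n+1)$ gives $\partial_t R_{\min} \geq 2 R_{\min}^2 / (n+1)$ via the parabolic maximum principle, which forces a first singular time $T \leq (n+1)/(2 R_{\min}(0)) < \infty$.

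To identify this singularity as a neckpinch, the sufficient-pinching condition (iii) guarantees that $\psi(\cdot,0)$ has an interior local minimum which is small relative to the polar radii, while (ii) and Angenent's Sturmian theorem applied to the linearized evolution of $\psi_r$ ensure that the number of critical points of $\psi(\cdot,t)$ is nonincreasing in $t$. A maximum principle on $\psi$ along the interior critical set shows $\psi_{\min}(t) \to 0$ as $t \nearrow T$ while the polar radii remain bounded below. Performing the Type-I parabolic rescaling $\tilde g(s) = (T-t)^{-1} g(t)$ with $s = -\log(T-t)$ about the pinch location $x_*(t)$, a priori control on $\psi_r$ (preserved by its own evolution equation) together with parabolic regularity yields subsequential $\mathcal{C}^{\infty}_{loc}$ convergence of the rescaled solutions to a positively curved, $SO(n+1)$-invariant shrinking gradient soliton on $\R \times \sphere^n$; by the classification of such solitons, the limit must be the round cylinder $ds^2 + 2(n-1)(T-t) g_{can}$.

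For the sharper conclusions when $g_0 \in \AK_0$, reflection invariance is preserved by (\ref{RF-eq}), hence $\psi(r,t) = \psi(-r,t)$ and $r=0$ is always a critical point of $\psi(\cdot,t)$. Combined with the Sturmian nonincrease of critical points, the interior local minimum cannot leave the equator, so $x_*(t) \equiv 0$ and the singular set is $\{0\} \times \sphere^n$. For the diameter bound, the maximum principle applied to $\psi_r$, together with the boundary behavior $\psi_r(\pm 1, t) = \mp 1$, gives $|\psi_r| \leq 1$ throughout the flow. Reflection symmetry reduces the diameter to $2 r_+(t)$; one then controls $r_+(t)$ by comparing the polar cap region (where $\psi_r \to -1$) to a shrinking round hemisphere, while bounding the length of the almost-cylindrical middle region via the $|\psi_r|\le 1$ estimate and the lower bound on $\psi$ away from the pinch.

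The main obstacle is the blow-up analysis of the second step: producing the Type-I curvature control needed for compactness of the rescaled solutions, and ruling out non-cylindrical soliton limits, both require delicate PDE estimates beyond the maximum principle. The Sturmian count of critical points of $\psi(\cdot,t)$, which is the single tool responsible for both the uniqueness of the pinch in the general case and its localization at the equator in the symmetric case, is also technically subtle because the relevant evolution is quasi-linear, so Angenent's result must be applied to a carefully chosen linear variation of $\psi_r$ rather than to $\psi_r$ itself.
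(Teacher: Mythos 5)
First, note that the paper does not prove this statement at all: Theorem \ref{thm-AK12} is quoted as background from Angenent--Knopf (Section 10 of \cite{AK1} and Lemma 2 of \cite{AK2}), and the paper's own contribution is a different theorem (Theorem \ref{thm-main-1}), where the single-point pinching is reproved by optimal transport, assuming a diameter bound but no reflection symmetry. So your sketch is necessarily an outline of the original Angenent--Knopf PDE argument rather than anything in this paper. As such an outline it is broadly in the right spirit (reduction to the scalar equation for $\psi$, preserved positivity of scalar curvature for finite-time singularity, Sturmian control of the number of bumps, preserved bound $|\psi_r|\le 1$), but two of its steps deviate from what Angenent--Knopf actually do: they do not identify the blow-up limit by appealing to a classification of rotationally invariant shrinking solitons (they derive the cylindrical asymptotics directly from a priori estimates on $\psi$, $\psi_r$, $\psi\psi_{rr}$ and the Type-I bound), and the diameter bound of \cite{AK2} is obtained by an integral estimate on $\frac{d}{dt}$ of radial distances via $\Ric(\partial_r,\partial_r)=-n\psi_{rr}/\psi$ under reflection symmetry, not by a comparison of the polar caps with shrinking hemispheres, which as stated is not an argument.

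The genuine gap is in the last conclusion, which is the heart of the theorem and of this paper. You argue that reflection symmetry plus the Sturmian count keeps the interior minimum of $\psi(\cdot,t)$ at $r=0$, and conclude that ``the singular set is $\{0\}\times\sphere^n$.'' But locating the minimum at the equator does not show that the singularity occurs \emph{only} there: one must rule out that $\psi(\cdot,T)$ vanishes on a whole interval $[-r_1,r_1]$ with $r_1>0$, i.e. prove $\psi(r,T)>0$ for every $r\neq 0$. Nothing in your sketch addresses this; the Sturmian argument is consistent with an interval of pinching centered at the equator. This is exactly the delicate step in Section 10 of \cite{AK1}, carried out by constructing a family of subsolutions for $\psi_r$ along the flow to get a positive lower bound on $\psi(r,T)$ for $r\neq 0$ (and it is the step the present paper replaces by the Wasserstein-contractivity argument of Section \ref{sec-neckpinch control}: a pinching interval of positive length would force $\partial L/\partial t$ to be unbounded below, contradicting the diameter bound). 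Without either the subsolution construction or an argument of this optimal-transport type, your proof establishes at most that the neck radius attains its minimum at the equator, not the one-point pinching assertion of the theorem.
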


In particular, to prove ``one-point pinching'', Angenent-Knopf  assume that the initial metric has at least two bumps denoting the locations of those bumps by $x = a(t)$ and $x = b(t)$ for the left and right bump (resp.)  To show that the singularity occurs only on $\{0\} \times \sphere^{n}$, they show that $\psi(r, T) >0$, for $r>0$. Their proof of this one-point pinching requires delicate analysis and construction of a family of subsolutions for $\psi_r$ along the flow.\\

Later, in  \cite{ACK}, Angenent-Caputo-Knopf extended this work by constructing smooth forward evolutions of the Ricci flow starting from initial singular metrics which arise from these rotationally symmetric neck pinches on $\sphere^{n+1}$ described above. To do so requires a careful limiting argument and precise control on the asymptotic profile of the singularity as it emerges from the neckpinch singularity. By passing to the limit of a sequence of these Ricci flows with surgery, effectively the surgery is performed at scale zero. A smooth complete solution $(M^n, g(t))$ on some interval $t \in (T, T')$ of the Ricci flow is called a {\em forward evolution} of a singular Riemannian metric $g(T)$ on $M^n$ if on any open set $\mathcal{O} \subset M^n$, as $t \searrow T$ the metric $g(t)|_{\mathcal{O}}$ converges smoothy to $g(T)|_{\mathcal{O}}$. \\

They prove

\begin{thm}
\label{thm-ACK}
{\em (c.f. Theorem 1 in \cite{ACK})}
Let $g_0$ denote a singular Riemannian metric on $\sphere^{n+1}$, for $n\geq 2$, arising as the limit as $t \nearrow T$ of a rotationally symmetric neckpinch forming at time $T <\infty$. Then there exists a complete smooth forward evolution $(\sphere^{n+1}, g(t))$ for $T < t < T'$ of $g_0$ by the Ricci flow. Any such smooth forward evolution is compact and satisfies a unique asymptotic profile as it emerges from the singularity.
\end{thm}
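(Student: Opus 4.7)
The plan is to construct the forward evolution as a limit of smooth Ricci flows starting from regularizations of $g_0$ performed at progressively finer scales, and then to identify a unique self-similar model for the emergent geometry via a matched asymptotic expansion. The key point is that although $g_0$ is singular along the pinched hypersurface $\{x_0\}\times\sphere^n$, its asymptotic structure is very rigid: near the singular slice it looks like a shrinking cylinder soliton, which suggests that a canonical expanding model should govern the emergence.

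First I would approximate the singular metric $g_0$ by a one-parameter family of smooth rotationally symmetric metrics $g_\epsilon$ on $\sphere^{n+1}$, obtained by truncating the profile $\psi(r,T)$ where $\psi \leq \epsilon$ and gluing in two standard rotationally symmetric caps whose geometry is rescaled from a fixed smooth model cap (e.g.\ a Bryant-soliton tip or a scaled round hemisphere) matched to the asymptotic expansion of $\psi(r,T)$. By \thmref{thm-AK12} and the Type-I rate of the neckpinch, this matching can be done in $\mathcal{C}^{\infty}_{\mathrm{loc}}$ away from $\{x_0\}\times\sphere^n$, so that $g_\epsilon \to g_0$ smoothly on the complement of the singular slice. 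Each $g_\epsilon$ is smooth and $SO(n+1)$-invariant, so the Ricci flow $g_\epsilon(t)$ exists on some interval $[T,T_\epsilon)$ and reduces to a scalar parabolic PDE for $\psi_\epsilon(\cdot,t)$.

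The next step is to extract a limit as $\epsilon \to 0$. Here one constructs barriers: an upper barrier reflecting the bulk geometry of the original two lobes of the neckpinch (which are smooth under the flow for $t>T$) and a lower barrier built from the self-similar expanding soliton with cylindrical asymptotics on $\R\times\sphere^n$. These barriers trap $\psi_\epsilon(\cdot,t)$ in a tube whose width shrinks to zero as $\epsilon\to 0$ and $t\searrow T$, yielding uniform curvature bounds on $(T+\delta,T_\epsilon)$ for every $\delta>0$. Standard parabolic compactness then gives a smooth Ricci flow $(\sphere^{n+1}, g(t))$ on $(T,T')$ as a subsequential limit; compactness of the ambient manifold is preserved because the surgery is performed at scale $0$, so no topology is changed. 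The fact that every such limit agrees with $g_0$ smoothly away from the singular slice as $t\searrow T$ is immediate from the barrier construction, and the two lobes glue together through a region modeled on the expanding soliton, which determines the pointed limit under the natural Type-II rescaling and hence the asymptotic profile uniquely.

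The main obstacle is the uniqueness of the asymptotic profile, which is what upgrades the construction from ``a'' forward evolution to a canonical one. Concretely one must control the solution in three matched regions: an inner region of size $O(\sqrt{t-T})$ around $x_0$ that is modeled on a rotationally symmetric expanding gradient soliton with cylindrical ends; an outer region where $g(t)$ is essentially the smooth extension of $g(T)$ restricted to one of the lobes; and a parabolic intermediate region that interpolates between them. The rigidity of the cylindrical tangent flow at $T$ pins down the leading-order profile in the inner region uniquely, while parabolic stability of the linearization of the Ricci flow around the expanding soliton controls the lower-order terms. Executing this matched asymptotic analysis rigorously, and showing that the errors are $\epsilon$-independent, is the technical core; everything else reduces to standard compactness, the maximum principle for the $\psi$ equation, and the rotationally symmetric reduction already used in \cite{AK1,AK2}.
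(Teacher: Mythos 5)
The paper does not contain a proof of Theorem~\ref{thm-ACK}: it is a restatement of Theorem~1 of Angenent--Caputo--Knopf \cite{ACK}, invoked as a black-box input for the framework of Section~\ref{framework}, so there is no ``paper's own proof'' to compare your attempt against.

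As a sketch of the actual ACK argument, your outline captures the broad architecture --- regularize the singular profile, flow each regularization, trap $\psi$ between barriers, pass to a limit, and nail down the emergent asymptotics in matched inner/intermediate/outer regions --- but two of the load-bearing details are off. First, the inner asymptotic model in \cite{ACK} is the Bryant \emph{steady} soliton (a complete steady gradient soliton with cylindrical asymptotics), not an \emph{expanding} gradient soliton, and the emergence from the singularity is Type-II, so the rescaling that reveals the inner model is not the naive parabolic $\sqrt{t-T}$ scaling you describe. Second, the uniqueness of the asymptotic profile in \cite{ACK} is not established by ``parabolic stability of the linearization around the soliton''; it comes from a comparison-principle argument for the rotationally symmetric reduction, using explicit sub- and super-solutions for $\psi$ (and monotone quantities such as $\psi_r^2$) whose bounds are $\epsilon$-independent by construction. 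Saying that everything else ``reduces to standard compactness and the maximum principle'' understates what has to be shown: the hard content is precisely the construction of barriers sharp enough to force a unique limit, which your proposal gestures at but does not supply.
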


Viewed together, the work of \cite{AK1, AK2, ACK} and Theorems \ref{thm-AK12} and \ref{thm-ACK} above provide a framework for developing the notion of a ``canonically defined Ricci flow through singularities'' as conjectured by Perelman in \cite{Perelman-RFWS}, albeit in this rather restricted context of non degenerate neckpinch singularities which arise for these particular initial metrics on $\mathbb{S}^{n+1}$ described by Angenent-Knopf (i.e. for $g_0 \in \AK_0$). Such a construction for more general singularities remains a very difficult issue, even for more general neckpinch singularities. Up to this point, continuing a solution of the Ricci flow past a singular time $T < \infty$ required surgery and a series of carefully made choices so that certain crucial estimates remain bounded through the flow \cite{Hamilton-Four-Manifolds, Perelman-RFWS, Perelman-entropy, Perelman-RFsolns}. A complete canonical Ricci flow through singularities would avoid these arbitrary choices and would be broad enough to address all types of singularities that arise in the Ricci flow.\\

\subsection{Optimal Transportation}
\label{OT prelim}
Let $(X,d)$ be a compact metric space and consider the space of Borel probability measures on $X$, denoted $\mathscr{P}(X)$. Given two probability measures $\mu_1, \mu_2 \in \mathscr{P}(X)$, the $p$-Wasserstein distance between them is given by 
\be
\label{W distance}
W_p(\mu_1, \mu_2) = \left[\inf_{\pi \in \Gamma(\mu_1,\mu_2)} \int_{X\times X} d(x,y)^p d\pi(x,y)\right]^{1/p}, \quad \text{ for } p \geq 1,
\ee
where the infimum is taken over the space $\Gamma(\mu_1, \mu_2)$ of all  joint probability measures $\pi$ on $X \times X$ which have marginals $\mu_1$ and $\mu_2$; i.e.  for projections $\pr_1, \pr_2: X \times X \to X$ onto the first and second factors (resp.), one has
\[{\pr_1}_* \pi = \mu_1 \quad \text{ and } \quad  {\pr_2}_* \pi = \mu_2.\]
Any such probability measure $\pi\in \Gamma(\mu_1, \mu_2)$ is called a \emph{transference plan} between $\mu_1$ and $\mu_2$ and when $\pi$ realizes the infimum in (\ref{W distance}) we say $\pi$ is an \emph{optimal transference plan}.\\ 

The Wasserstein distance makes $\mathscr{P}(X)$ into a metric space $(\mathscr{P}(X),W_p)$ which inherits many of the properties of $(X,d)$. In particular, if $(X,d)$ is a complete, separable compact length space then so is  $(\mathscr{P}(X),W_p)$. A constant speed minimizing geodesic in $(\mathscr{P}(X), W_p)$ is called a \emph{Wasserstein geodesic} and it is possible that there are an uncountable number of minimizing Wasserstein geodesics between two given measures $\mu_0 , \mu_1 \in \mathscr{P}(X)$. This is in part caused by some sort of symmetry in the space $X$.\\ 

Denote by $\Gamma(X)$ the set of all minimizing geodesics $ \gamma : [0,1] \to X$ and note that this set is compact in the uniform  topology. A \emph{dynamical transference plan} is a probability measure $\Pi$ on $\Gamma(X)$ such that 
\be
	\pi := (e_0 , e_1)_* \Pi,
\ee
is a transference plan between $\mu_0$ and $\mu_1$, where $e_t$ denotes the evaluation map $e_t : \Gamma(X) \to X$ given by 
\be
	e_t (\gamma) = \gamma (t), \quad \text{ for } t \in [0,1].
\ee

A dynamical transference plan $ \Pi$ is called \emph{optimal} if $\pi$ is an optimal transference plan. If $\Pi$ is an optimal dynamical transference plan, then the displacement one-parameter family of probability measures 
\be
	\mu_t = (e_t)_*  \Pi
\ee
is called a \emph{displacement interpolation}. One can show that any displacement interpolation is a Wasserstein geodesic and, furthermore, any Wasserstein geodesic is the displacement interpolation for some optimal dynamical transference plan (see \cite{Lott-Villani-09}). \\

\subsection{Optimal Transport in Presence of Rotational Symmetry.}
\label{sec-OT-Rot}
In this Section, we examine the optimal transport problem for two rotationally invariant uniform measures $\mu_0, \mu_1$ on a given metric space which also has axial  symmetry. The main result of this section is the rather unsurprising fact that under some conditions, the transport problem reduces to the transport problem on $\R$.\\

In order to simulate symmetry in metric measure spaces, we introduce the notion of a \emph{balanced metric measure space}. This is a rather strong condition and so is symmetry.

\begin{defn}[Balanced Metric Measure Spaces]
For some real number $r_0$, a metric measure space $(X,d_X, m)$ is said to be \emph{balanced at the scale $r_0$} if the function $f_r : X \to \R$ defined by
\be
	f_r (\cdot) := m \big( B\left( \cdot , r  \right)  \big)
\ee
is constant for all $r \in \left(0 , r_0 \right)$; i.e. $x \mapsto m\left( B(\cdot, r)\right)$ does not depend on  $x \in \supp(m) \subset X$, for all $r \in (0, r_0)$.
\end{defn}

Similar definitions for balanced metric measure spaces have been used by Sturm (c.f. Section 8.1 of \cite{Sturm-SpaceSpace}). We also define the concept of axial symmetry for measures on warped products of metric measure spaces. Recall, 

\begin{defn}[c.f. Definition 3.1 in \cite{Chen1999}]
\label{Chen-warped prod}
Suppose  $(X, d_X)$, $(Y, d_Y)$ are two metric spaces and  $f: X \to \R^+$ is a continuous function on $X$. For two points $a,b \in X \times Y$, define
\[
d(a,b) := \inf_{\gamma} \{L_f(\gamma) ~:~ \gamma \text{ is a curve from } x \text{ to } y\},
\]
where $L_f(\gamma)$ denotes the length of the curve $\gamma(s) := (\alpha(s), \beta(s)) \in M \times N$ given by 
\[
L_f(\gamma) : = \lim_{\tau} \sum^n_{i=1} \sqrt{d_X^2(\alpha(t_{i-1},)\alpha(t_i)) + f^2(\alpha(t_{i-1}))d_Y^2(\beta(t_{i-1}), \beta(t_i))}
\]
and the limit is taken with respect to the refinement ordering of partitions $\tau$ of $[0,1]$ denoted by $\tau : 0 = t_0 < t_1 < \cdots < t_n = 1$.
$X\times Y$ equipped with this metric $d$ is called \emph{the warped product of $X$ and $Y$ with warping function $f$} and is denoted $(X\times_f Y, d)$.
\end{defn}

Chen \cite{Chen1999} verifies that $d$ is indeed a distance metric on $M\times N$ and further proves the following properties for geodesics in $(M\times_f N,d)$.

\begin{prop}[see Theorem 4.1 in \cite{Chen1999}]
\label{Chen-geod exists}
With $(M, d_M), (N,d_N)$ and $f$ as above, if $M$ is a complete, locally compact metric space and $N$ is a geodesic space, then for any $a,b \in (M\times_f N, d)$ there exists a geodesic joining $a$ to $b$.
\end{prop}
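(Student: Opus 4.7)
The plan is to mimic the direct method of the calculus of variations, adapted to the warped-product length $L_f$. Let $D := d(a,b)$. I would begin by choosing a minimizing sequence of curves $\gamma_k = (\alpha_k, \beta_k) : [0,1] \to M \times N$ from $a$ to $b$, each parametrized so that its $L_f$-speed is constant (and hence equal to $L_f(\gamma_k)$, which tends to $D$). Directly from the definition of $L_f$ one reads off the Lipschitz bound
\[
d_M(\alpha_k(s), \alpha_k(t)) \leq L_f(\gamma_k)\,|s-t|,
\]
so the projections $\{\alpha_k\}$ form a uniformly equicontinuous family on $[0,1]$.

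Next, I would confine the $\alpha_k$ to a compact subset of $M$. Each $\alpha_k$ starts at the $M$-projection of $a$ and has $d_M$-length bounded by $D+1$ for $k$ large. Using completeness and local compactness of $M$ in a Hopf--Rinow-type covering argument -- chaining precompact balls along each curve, with the total number depending only on $D$ -- one produces a single compact set $K_M \subset M$ containing all the images $\alpha_k([0,1])$. By Arzel\`a--Ascoli, after passing to a subsequence $\alpha_{k_j}$ converges uniformly to a continuous curve $\alpha_\infty$ with the correct endpoints.

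Because $f$ is continuous and strictly positive on the compact set $K_M$, set $f_0 := \min_{K_M} f > 0$. The definition of $L_f$ then yields the uniform bounds
\[
L_{d_N}(\beta_k) \leq L_f(\gamma_k)/f_0, \qquad d_N(\beta_k(s), \beta_k(t)) \leq L_f(\gamma_k)\,|s-t|/f_0,
\]
so $\{\beta_k\}$ is uniformly equicontinuous and each $\beta_k$ lies in a bounded $d_N$-neighbourhood of its common starting point. Using that $N$ is a geodesic space, together with the uniform length bound, one argues that these images are contained in a compact set (after a diagonal/completion step), and extracts a further uniformly convergent subsequence $\beta_{k_j} \to \beta_\infty$. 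Set $\gamma_\infty := (\alpha_\infty, \beta_\infty)$.

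The argument closes with the standard lower semicontinuity of length under uniform convergence: since $\alpha_{k_j} \to \alpha_\infty$ uniformly and $f$ is continuous on $K_M$, we have $f\circ \alpha_{k_j} \to f\circ \alpha_\infty$ uniformly, and combining this with lower semicontinuity of the $d_N$-length applied to $\beta_{k_j} \to \beta_\infty$ through the Riemann-sum definition of $L_f$ gives
\[
L_f(\gamma_\infty) \leq \liminf_j L_f(\gamma_{k_j}) = D.
\]
Since $\gamma_\infty$ joins $a$ to $b$, equality holds and $\gamma_\infty$ is the desired minimizing geodesic. The main obstacle is the compactness step for the $\beta_k$: because $N$ is assumed only to be a geodesic space (not proper), Heine--Borel is unavailable on $N$, and one must exploit the warped-product structure together with the uniform bound $L_{d_N}(\beta_k) \leq D/f_0$ to trap the $\beta_k$ in a compact set -- this is the point at which Chen's argument in \cite{Chen1999} invests the most care.
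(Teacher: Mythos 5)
A procedural note first: the paper does not prove this proposition --- it is cited directly from Chen's paper and no argument is supplied, so there is no internal proof to compare against; your proposal must stand on its own.

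Your treatment of the $M$-projections is sound: constant-speed reparametrization, the Lipschitz bound $d_M(\alpha_k(s),\alpha_k(t)) \le L_f(\gamma_k)\,|s-t|$, properness of $M$ via a Hopf--Rinow argument (implicitly one is using that $M$ is a length space, which is built into the warped-product setting), and Arzel\`a--Ascoli all go through. The genuine gap is precisely where you flag it, and it is not a cosmetic one: being a geodesic space together with a uniform $d_N$-length bound does \emph{not} confine the $\beta_k$ to a compact subset of $N$. Take $N = \ell^2$ and $\beta_k(t) = t\,e_k$: all curves start at $0$, have length $1$, and lie in the closed unit ball, yet no subsequence converges uniformly since $\beta_k(1) = e_k$ has no convergent subsequence. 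So the sentence ``one argues that these images are contained in a compact set (after a diagonal/completion step)'' is not a step that can be carried out from the stated hypotheses, and the argument as written does not close.

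The missing idea is the one recorded as the very next proposition in the paper: the $N$-component of a length-minimizing curve in the warped product is a monotone reparametrization of a geodesic of $N$, and this reduction should be made \emph{before} taking any limit. Concretely, given an admissible curve $(\alpha,\beta)$ and any $N$-geodesic $\sigma : [0,\ell] \to N$ joining the $N$-components of $a$ and $b$ (where $\ell$ is their $d_N$-distance, finite since $N$ is geodesic), one may replace $\beta$ by $\sigma \circ h$ for a nondecreasing $h:[0,1] \to [0,\ell]$ without increasing $L_f$: in the Riemann-sum definition of $L_f$, set $h(t_0)=0$ and $h(t_i) = \min\{h(t_{i-1}) + d_N(\beta(t_{i-1}),\beta(t_i)),\,\ell\}$, so each increment $d_N(\sigma(h(t_{i-1})),\sigma(h(t_i)))$ is at most $d_N(\beta(t_{i-1}),\beta(t_i))$, while the triangle inequality forces $h$ to reach $\ell$. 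After this reduction every competitor has its $N$-component inside the compact set $\sigma([0,\ell])$, and your Arzel\`a--Ascoli and lower-semicontinuity machinery applies without further obstruction. In short: do not try to compactify the $N$-projections of an arbitrary minimizing sequence; first restrict to the compact fiber given by the image of a single $N$-geodesic.
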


\begin{prop}[see Lemma 3.2 in \cite{Chen1999}]
Geodesics in $X$ lift horizontally to geodesics in $(X \times_f Y,d)$. Furthermore, if $(\alpha, \beta)$ is a geodesic in $(X \times_f Y, d)$ then $\beta$ is a geodesic in $N$.
\end{prop}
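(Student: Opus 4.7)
The plan is to handle the two assertions separately, both leveraging the length formula from \defref{Chen-warped prod}. For the first claim, let $\alpha \colon [0,1] \to X$ be a minimizing geodesic and fix any $y_0 \in Y$; I would consider the horizontal lift $\gamma(s) := (\alpha(s), y_0)$. Plugging into the definition of $L_f$, the $Y$-contribution $d_Y^2(y_0, y_0) = 0$ vanishes in every partition sum, so $L_f(\gamma) = L_X(\alpha) = d_X(\alpha(0), \alpha(1))$. For any competitor curve $\tilde{\gamma} = (\tilde{\alpha}, \tilde{\beta})$ from $(\alpha(0), y_0)$ to $(\alpha(1), y_0)$, applying the elementary inequality $\sqrt{a^2 + b^2} \geq a$ termwise in the formula for $L_f(\tilde{\gamma})$ gives $L_f(\tilde{\gamma}) \geq L_X(\tilde{\alpha}) \geq d_X(\alpha(0), \alpha(1)) = L_f(\gamma)$. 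Hence $\gamma$ realizes the infimum and is a minimizing geodesic in the warped product.

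For the second assertion I would argue by contradiction. Suppose $(\alpha, \beta) \colon [0,1] \to X \times_f Y$ is a minimizing geodesic but $\beta$ fails to be a geodesic in $Y$, i.e.\ $L_Y(\beta) > d_Y(\beta(0), \beta(1))$. Since $Y$ is a geodesic space, choose a minimizing geodesic $\tilde{\beta} \colon [0,1] \to Y$ from $\beta(0)$ to $\beta(1)$, parameterized proportionally to $Y$-arc length so that its metric speed equals the constant $L_Y(\tilde{\beta})$. Reparameterize $(\alpha, \beta)$ so that $\beta$ likewise has constant $Y$-speed equal to $L_Y(\beta)$; since reparameterization preserves the warped length, minimality is preserved. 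Using the integral representation of $L_f$ on absolutely continuous curves applied to the competitor $(\alpha, \tilde{\beta})$, which connects the same endpoints,
\begin{equation*}
L_f(\alpha, \tilde{\beta}) = \int_0^1 \sqrt{|\alpha'|_X^2 + f^2(\alpha(s))\, L_Y(\tilde{\beta})^2} \, ds < \int_0^1 \sqrt{|\alpha'|_X^2 + f^2(\alpha(s))\, L_Y(\beta)^2} \, ds = L_f(\alpha, \beta),
\end{equation*}
the strict inequality following from $L_Y(\tilde{\beta}) < L_Y(\beta)$ and positivity of $f$. This contradicts the assumed minimality of $(\alpha, \beta)$, so $\beta$ must be a $Y$-geodesic.

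The principal technical obstacle is justifying the integral representation of $L_f$ in the metric setting, since \defref{Chen-warped prod} defines the length through partition sums rather than as an integral. Concretely, one must verify that for an absolutely continuous curve $(\alpha, \beta)$ the metric derivatives $|\alpha'|_X$ and $|\beta'|_Y$ exist almost everywhere and that the partition sums converge to the claimed integral; this is standard for Lipschitz curves but needs care when $f$ is only continuous. A parallel fallback is to stay at the level of Riemann sums: after parameterizing $\beta$ and $\tilde{\beta}$ by $Y$-arc length, one uses that $d_Y(\tilde{\beta}(t_{i-1}), \tilde{\beta}(t_i)) = L_Y(\tilde{\beta})(t_i - t_{i-1})$ exactly because $\tilde{\beta}$ is minimizing, together with uniform continuity of $f \circ \alpha$, to compare the two partition sums as the mesh tends to zero and extract the strict inequality.
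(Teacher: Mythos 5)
The paper does not actually prove this proposition; it cites it verbatim as Lemma~3.2 of Chen~\cite{Chen1999}, so there is no in-paper proof to compare against. Judged on its own terms, your argument is sound and is the natural one. The first half is complete: the $Y$-increments vanish identically along the horizontal lift, and $\sqrt{a^2+b^2}\ge a$ applied term-by-term to any competitor's partition sums gives $L_f(\tilde\gamma)\ge L_X(\tilde\alpha)\ge d_X(\alpha(0),\alpha(1))$, so the lift is minimizing (and constant speed if $\alpha$ is).

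For the second half the structure is right, and you have correctly identified the one real technical burden, namely passing from the refinement-limit definition of $L_f$ to the integral formula for absolutely continuous curves. I would flag one additional small gap in the way you set up the comparison: you propose to reparameterize $(\alpha,\beta)$ so that $\beta$ has \emph{constant} $Y$-speed, but that change of variables is not well-defined if $\beta$ sits still on a subinterval while $\alpha$ moves (the inverse of the $Y$-arc-length function is then discontinuous, and $\alpha$ would be forced to jump). A cleaner route that avoids this entirely is to leave $\alpha$ and the parameter alone, let $\sigma(s)=L_Y(\beta|_{[0,s]})$, take $\eta:[0,1]\to Y$ a constant-speed minimizer from $\beta(0)$ to $\beta(1)$, and set $\tilde\beta(s):=\eta\bigl(\sigma(s)/L_Y(\beta)\bigr)$. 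Then the metric speeds satisfy $|\tilde\beta'|(s)=|\beta'|(s)\cdot d_Y(\beta(0),\beta(1))/L_Y(\beta)$ almost everywhere, so $|\tilde\beta'|<|\beta'|$ wherever $|\beta'|>0$; since $f>0$, the integrand for $L_f(\alpha,\tilde\beta)$ is strictly smaller on a set of positive measure, giving the contradiction without any reparameterization of $\alpha$. Your partition-sum fallback would also need this same correction: replacing $\beta$ by the arc-length-parameterized minimizer does not by itself make each $d_Y(\tilde\beta(t_{i-1}),\tilde\beta(t_i))$ dominated by $d_Y(\beta(t_{i-1}),\beta(t_i))$ for a fixed partition, so you would still need to synchronize $\tilde\beta$ with $\beta$ via $\sigma$ as above. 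Also note that the argument shows $\beta$ is a \emph{minimizing path}; whether it is a constant-speed geodesic in the strict sense depends on the chosen parameterization of $(\alpha,\beta)$, a point worth a sentence if one wants the statement exactly as phrased.
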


Motivated by these generalizations of warped products to metric spaces, we define 

\begin{defn}[Axially Symmetric Measures]
 Let $(X , d_X, m_X)$ be a metric-measure space and $Y = (\R \times_f X, d)$. Let $\mu \in \mathscr{P}(Y)$ be absolutely continuous with respect to the product measure $dr \times m_X$ on $\R \times X$ and, for $t \in \R$, denote the projection map $\pi_t: Y \to \{t\} \times X$. We say \emph{$\mu$ is axially symmetric at scale $r_0$} if, given any $t \in \R$, the metric-measure space
 \be
 	\Big(\{t\} \times X , f(t)d_X, \left(\pi_t\right)_* \mu \Big)
\ee 
is balanced at scale $r_0 f(t)$. In particular, this means we can write the density function\\ \mbox{$d\mu/ (dr \times dm_X) = \rho(t, x)  : X \to \R$} as a constant function, for all fixed $t \in \R $.
\end{defn}

\begin{defn}[Radial Geodesic]
With $X$ and $Y$ as above and fixing $p_0 \in X$, any geodesic of the form $\gamma(s)= \left( \alpha(s) , p_0 \right): [0,1] \to X$ is called a \emph{radial geodesic}.
\end{defn}

In order to get a nice picture, our model object of study will be two canonical round spheres  joined by an interval of length $L$. We will be interested in studying the optimal transportation of uniform probability measures on balls around the branching points. The reason for considering uniform probability measures will become clear in later sections. 

\begin{thm}\label{thm-rotsym-transport}

Let $Y= (\R \times_f X, d)$ as above and let $\mu_0$ and $\mu_1$ be two axially symmetric probability measures on $Y$ at scales $r_0$ and $r_1$, respectively. Let $\Pi$ be an optimal dynamical transference plan between $\mu_0$ and $\mu_1$ and let 
\be
\Lambda := \left\{ \text{ radial geodesics } \; \gamma: [0,1] \to Y  \right\} \subset \Gamma(Y).
\ee
Then
\be
	\Pi \left( \Lambda \right) = 1.
\ee
Namely, the mass will be transferred almost surely only along the radial geodesics.
\end{thm}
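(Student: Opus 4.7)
My plan is to show that the optimal transport problem on $Y = \R \times_f X$ reduces exactly to the one on the base $\R$, with the reduction made rigid by the axial symmetry of $\mu_0$ and $\mu_1$.

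First, I would note that the projection $\pi_\R \colon Y \to \R$, $(r,x) \mapsto r$, is $1$-Lipschitz: by the length formula of \defref{Chen-warped prod}, every rectifiable curve $\gamma = (\alpha, \beta)\colon [0,1] \to Y$ satisfies $L_f(\gamma) \geq L(\alpha)$, so
\[
d_Y\bigl((r_0, x_0), (r_1, x_1)\bigr) \; \geq \; |r_1 - r_0|,
\]
with equality possible only when the $X$-component $\beta$ can be made to contribute zero length, i.e.\ only if $x_0 = x_1$. Setting $\bar\mu_i := (\pi_\R)_* \mu_i$, this gives the lower bound
\[
\int_{Y \times Y} d_Y^p \, d\pi \; \geq \; \int_{\R \times \R} |r_0 - r_1|^p \, d\bar\pi \; \geq \; W_p^p(\bar\mu_0, \bar\mu_1),
\]
for every $\pi \in \Gamma(\mu_0, \mu_1)$, where $\bar\pi := (\pi_\R \times \pi_\R)_* \pi$.

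Second, I would use axial symmetry to achieve this bound by lifting an optimal plan from $\R$ to $Y$ along radial paths. By the axial symmetry assumption, $\mu_i = \rho_i(r)\, dr \otimes dm_X$ with $\rho_i$ independent of $x$. Disintegrating over the $\R$-factor produces a common conditional fiber probability measure $\nu$ proportional to $m_X$ on $X$, independent of $r$ and of $i$. Given an optimal plan $\bar\pi^*$ between $\bar\mu_0$ and $\bar\mu_1$ on $\R$, I set
\[
\pi^* \; := \; \int_X (\iota_x \times \iota_x)_* \bar\pi^* \; d\nu(x), \qquad \iota_x(r) := (r,x).
\]
A direct computation using the product-density form of $\mu_i$ shows $\pi^* \in \Gamma(\mu_0, \mu_1)$, and since $d_Y((r_0, x), (r_1, x)) = |r_0 - r_1|$ the cost of $\pi^*$ equals $W_p^p(\bar\mu_0, \bar\mu_1)$. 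Combining with the previous step,
\[
W_p^p(\mu_0, \mu_1) \; = \; W_p^p(\bar\mu_0, \bar\mu_1),
\]
and every optimal $\pi \in \Gamma(\mu_0, \mu_1)$ must saturate both inequalities. Thus $d_Y((r_0, x_0), (r_1, x_1)) = |r_1 - r_0|$ for $\pi$-almost every pair, forcing $x_0 = x_1$ $\pi$-a.e.

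Finally, to conclude about the dynamical plan, let $\Pi$ be optimal and set $\pi = (e_0, e_1)_* \Pi$. For $\Pi$-almost every geodesic $\gamma = (\alpha, \beta) \in \Gamma(Y)$ one then has $\beta(0) = \beta(1)$ and $L_f(\gamma) = d_Y(\gamma(0), \gamma(1)) = |r_1 - r_0|$. Since $f > 0$, the length formula of \defref{Chen-warped prod} forces $d_X(\beta(t_{i-1}), \beta(t_i)) = 0$ on every partition, so $\beta$ is constant and $\gamma$ is radial, yielding $\Pi(\Lambda) = 1$. The principal obstacle is the lifting construction: one must verify that the constant-in-$x$ density content of axial symmetry is exactly what is needed for $\pi^*$ to have the correct marginals $\mu_0$ and $\mu_1$, and that the resulting plan is supported on fiber-matching pairs along which the warped product distance is realized by the obvious radial path; the rigidity step producing radial geodesics from equality in the projection bound is then an immediate consequence of the length formula.
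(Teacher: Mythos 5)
Your proof is correct, and it takes a genuinely different route from the paper. The paper argues by contradiction via a local perturbation: assuming some mass moves non-radially, it builds a signed correction $\nu$ (by swapping destinations between nearby fibers using the balanced-measure hypothesis) so that $\widetilde{\pi} = \pi + \nu$ is a competing plan with strictly lower cost. Your argument is instead a global projection-and-rigidity argument: push everything down to $\R$ to get the lower bound $\int d_Y^p\, d\pi \geq W_p^p(\bar\mu_0,\bar\mu_1)$, use the constant-in-$x$ density guaranteed by axial symmetry to lift an optimal plan on $\R$ to a radial plan $\pi^*$ on $Y$ that saturates the bound, conclude $W_p(\mu_0,\mu_1)=W_p(\bar\mu_0,\bar\mu_1)$, and then read off from the equality case (using $f>0$ and Chen's length formula) that any optimal $\pi$ must match fibers $\pi$-a.e., whence any optimal dynamical plan is concentrated on radial geodesics. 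Both ultimately exploit the same geometric fact — that lateral motion in the $X$-fiber costs strictly extra when $f>0$ — but your version has real advantages: it avoids the somewhat delicate construction of the competing signed measure (and the neighborhood and $\epsilon$-bookkeeping that goes with it), it works verbatim for any strictly increasing convex cost $c(|r_0-r_1|)$, and it yields as a byproduct the useful identity $W_p(\mu_0,\mu_1)=W_p(\bar\mu_0,\bar\mu_1)$, which is exactly what Section 3 of the paper later uses implicitly when it reduces the Wasserstein computations to the one-dimensional formula. One small point worth making explicit in the rigidity step: to see that $L_f(\gamma)=|r_1-r_0|$ forces $\beta$ constant, note that if $\beta$ is nonconstant then $d_X(\beta(t_{i-1}),\beta(t_i))>0$ for some partition element, and since $f$ is bounded below away from $0$ on the compact image of $\alpha$, the cross term contributes a positive amount to every sufficiently fine partition sum, so the supremum strictly exceeds $|r_1-r_0|$; with that spelled out, the argument is complete.
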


\begin{proof}
	The proof relies on a perturbation argument. Set $\pi : = (e_0, e_1)_\#\Pi$ which by assumption is an optimal transference plan between $\mu_0$ and $\mu_1$. We assume that the conclusion is false; i.e. that $\pi$ does not (almost surely) transport mass along radial geodesics in $Y$, and then find a perturbation $\widetilde{\pi} : = \pi +  \nu$ of $\pi$ which violates the optimality of $\pi$.\\
	
	Choose $t,s \in \R$, ($t \neq s$). Let 
\be
	\A := \left\{ \Big(  \left( t,p \right) ,  \left( s , q  \right) \Big) \in \operatorname{supp}\left( \pi \right) ~:~ p \neq q \right\} \subset Y \times Y,
\ee
and set 
\be
	\Gamma_{\A} := \left\{ \gamma \in \Gamma \left( Y \right) ~:~ \left( e_0(\gamma), e_1(\gamma)  \right) \in \mathcal{A} \right\}.
\ee
By the hypothesis we can assume that $\pi(\A) > 0$ and $\Pi(\Gamma_{\A}) >0$. To show that the perturbation $\widetilde{\pi}$ violates the optimality of $\pi$, it suffices to define $\nu$ in such a way that $\nu \in \mathscr{P}(Y \times Y)$ satisfies the following conditions:

\begin{enumerate}[{\bf (i)}]
\item $\nu$ is a signed measure
\item $\nu^- \le \pi$
\item 	${\pr_i}_* \nu = 0$,  for $i = 1,2$ (i.e. marginals are null)
\item $\int_{Y \times Y} \left| x-y  \right| d\nu(x,y) <0$
\end{enumerate}

Let $\beta>0$ be small enough such that by continuity the $\beta$-neighborhood around $\A$, denoted $\A_{\beta}$ still satisfies
\be
	\pi \left( \mathcal{A}_\beta \right) >0 \quad \text{ and } \quad  \Pi \left( \Gamma_{\mathcal{A}_\beta} \right) >0.
\ee

Let $\delta = \min \left\{|t-s|, r_0 , r_1 , f(s)\frac{\beta}{2}   \right\}$ and take two disjoint neighborhoods of $Y$

\be
U = [s-\delta , s + \delta] \times B_{\left( X,f(s)d_X \right)}(p , \delta) 
\ee
and
\be
V = [s-\delta , s + \delta] \times B_{\left( X,f(s)d_X \right)}(q , \delta).
\ee

Now consider two subsets $N_1 = W \times U$ and $N_2 = W \times V$ of $Y \times Y$ with 
\be
W = [t -\delta   , t + \delta] \times B_{\left(  X , f(t) d_X  \right)}(p . \delta).
\ee 
 
Let $m = \pi \left(  W \times V  \right)$ and define
\be
	\nu := m\left( \pr_{W} , \pr_{U}  \right)_* \pi   -    \left( \pr_{W} , \pr_{V}  \right)_* \pi,
\ee
where in the first term $\pr_W$ is the projection $\pr_W: N_1 \to W$ and $\pr_U: N_1 \to U$ (and $\pr_W, \pr_V$ in the second term are defined similarly on $N_2 = W \times V$).\\

It is easy to see that $\nu$ satisfies conditions \textbf{(i)} - \textbf{(iii)}. Also for $\delta$ small enough there exists an $\epsilon$ such that 
\be
	|x-y| < |x-z| + \epsilon, \quad  \text{ for all }  \left( x,y,z  \right) \in N_1 \times N_3 \times N_2,
\ee
where $N_3 = U \times V$. Therefore,
\be
	\int_{Y \times Y} \left| x - y  \right| d\nu(x,y) < \epsilon m <0,
\ee
and hence condition \textbf{(iv)}.
\end{proof}

It follows that for two axially symmetric probability measures on some $\mathbb{R} \times_f  X$ as described above, any mass is transported along radial geodesics. Thus, the optimal transportation problem on $\mathbb{R} \times_f  X$ reduces to the optimal transportation problem on $\mathbb{R}$ alone.

With this in mind, we now recall some well-known facts about optimal transportation on $\R$.

\subsection{Optimal Transport on $\mathbb{R}$.} In this Section, we mention some relevant results in \cite{Villani-TOT} concerning the optimal transportation on the real line. Note that any probability measure $\mu$ on $\mathbb{R}$ can be represented by its cumulative distribution function. Namely,
\[
F(x) = \int_{-\infty}^x d\mu = \mu\left[ (-\infty, x)\right].
\]
From basic probability theory, it follows that $F(x)$ is right-continuous, nondecreasing, $F(-\infty)=0$, and $F(+\infty)=1$. Furthermore, one defines the generalized inverse of $F$ on $[0,1]$ as
\[F^{-1}(t) = \inf\{x \in \mathbb{R} ~:~ F(x) >t\}.\]

\begin{prop}[Quadratic Cost on $\mathbb{R}$, see Theorem 2.18 in \cite{Villani-TOT}]
\label{prop-real-line-transport}
For two probability measures $\mu_1$ and $\mu_2$ on $\R$ with respective cumulative distributions $F$ and $G$, we consider $\pi$ to be the probability measure on $\R^2$ with joint two-dimensional cumulative distribution function
\be
	\mathbf{H}(x,y) = \min \left\{ F(x) , G(y) \right\}.
\ee
Then $\pi \in \Gamma(\mu_1, \mu_2)$ and 
\[ 
W_2^2(\mu_1,\mu_2) = \int_0^1 |F^{-1}(t) - G ^{-1}(t)|^2 dt.
\]
\end{prop}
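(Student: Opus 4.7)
The plan is to first verify that $\pi$ is a valid transference plan, and then establish the Wasserstein identity by showing that $\pi$ is optimal for the quadratic cost. Both facts are classical (Hoeffding--Fr\'echet), but I would present the argument in a form that parallels the earlier \thmref{thm-rotsym-transport}, emphasizing that the optimal coupling on $\R$ is supported on a non-decreasing (monotone) subset of $\R^2$.

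For the marginal check, sending $y \to +\infty$ in $\mathbf{H}(x,y) = \min\{F(x),G(y)\}$ yields $F(x)$, so ${\pr_1}_* \pi = \mu_1$; symmetrically ${\pr_2}_* \pi = \mu_2$, giving $\pi \in \Gamma(\mu_1,\mu_2)$. A more transparent representation that I would use from this point on is: let $U$ be uniform on $[0,1]$ and set $X := F^{-1}(U)$ and $Y := G^{-1}(U)$. Using the standard quantile identity $\{F^{-1}(U) \le x\} = \{U \le F(x)\}$, one checks that the joint CDF of $(X,Y)$ is exactly $\mathbf{H}$, so $(X,Y) \sim \pi$. This representation already gives
\[
\int_{\R \times \R} (x-y)^2 \, d\pi(x,y) \;=\; \int_0^1 \big| F^{-1}(t) - G^{-1}(t) \big|^2 \, dt,
\]
which is the candidate value of $W_2^2(\mu_1,\mu_2)$.

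For optimality, I would expand
\[
\int (x-y)^2 \, d\tilde\pi \;=\; \int x^2 \, d\mu_1 \;+\; \int y^2 \, d\mu_2 \;-\; 2\int xy \, d\tilde\pi,
\]
valid for every $\tilde\pi \in \Gamma(\mu_1,\mu_2)$, which reduces minimizing the quadratic transport cost to maximizing the correlation $\int xy \, d\tilde\pi$. A two-point swapping argument then forces the support of any optimizer to be non-decreasing: if $(x_1,y_1),(x_2,y_2) \in \supp \tilde\pi$ satisfy $x_1 < x_2$ and $y_1 > y_2$, then locally redistributing a small amount of mass so that $x_i$ is paired with $y_i$ in the monotone order strictly increases $\int xy \, d\tilde\pi$ (this is the discrete rearrangement inequality). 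Finally, one verifies that a coupling with prescribed marginals $\mu_1,\mu_2$ and non-decreasing support is uniquely determined and coincides with our $\pi$; combined with the computation above this yields the claimed formula for $W_2^2(\mu_1,\mu_2)$.

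The main technical obstacle is handling atoms and plateaus of $F$ and $G$, where $F^{-1}$ and $G^{-1}$ are only generalized (right-continuous) inverses, so that $F\circ F^{-1}$ and $F^{-1}\circ F$ fail to equal the identity on at most a countable set. Both the joint-CDF identification for $(F^{-1}(U),G^{-1}(U))$ and the swapping argument must then be carried out modulo $\pi$-null sets, which is routine but demands some care with right-continuity; no new idea is needed beyond these bookkeeping details.
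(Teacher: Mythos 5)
The paper does not give its own proof of this proposition; it is stated as a direct citation of Theorem~2.18 in Villani's \emph{Topics in Optimal Transportation}. Your argument is the standard proof of that theorem and is correct: marginals recovered by sending $y\to+\infty$ (resp.\ $x\to+\infty$) in $\mathbf{H}$, the quantile-coupling representation $(X,Y)=(F^{-1}(U),G^{-1}(U))$ with $U$ uniform on $[0,1]$ giving joint CDF $\mathbf{H}$ and cost $\int_0^1|F^{-1}-G^{-1}|^2$, and then optimality by expanding $(x-y)^2$ to reduce the problem to maximizing $\int xy\,d\tilde\pi$, where a two-point swap (rearrangement inequality) forces any optimizer's support to be non-decreasing, and the marginal constraints pin down the monotone coupling uniquely. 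Two small points worth making explicit if you write this out in full. First, the expansion $\int(x-y)^2\,d\tilde\pi=\int x^2\,d\mu_1+\int y^2\,d\mu_2-2\int xy\,d\tilde\pi$ requires $\mu_1,\mu_2$ to have finite second moments so that each term is separately integrable; this is implicit in $W_2<\infty$ but should be stated. Second, you already flag the right issue with the generalized inverse: the paper's convention $F^{-1}(t)=\inf\{x: F(x)>t\}$ is the right-continuous version, for which $\{F^{-1}(U)\le x\}=\{U\le F(x)\}$ holds only up to a Lebesgue-null set of $U$-values; this is harmless for both the joint-CDF identification and the integral formula, but it is exactly the bookkeeping you describe. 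With those caveats stated, the proof is sound and coincides in substance with Villani's.
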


Furthermore, by the Hoeffding-Fr\'{e}chet Theorem a nonnegative function $H$ on $\mathbb{R}^2$ which is nondecreasing and right continuous in each argument, defines a probability measure $\pi$ on $\mathbb{R}^2$ with marginals $\mu_1$ and $\mu_2$ if and only if
\be
	F(x) + G(y) -1 \le \mathbf{H}(x,y) \le \min \left\{ F(x) , G(y) \right\}, \quad \text{ for all } (x,y) \in \R^2.
\ee

In fact, Proposition \ref{prop-real-line-transport} remains true when replacing $d(x,y)^p$ with any convex cost function. \\

In particular, taking $p=1$, it follows from Fubini's theorem that 
\be
\label{W1 vs L1 dist} 
W_1(\mu_1,\mu_2) = \int_0^1 |F^{-1}(t) - G ^{-1}(t)| dt=  \int_{\R} |F(x) - G (x)| dx.
\ee

\subsection{Ricci Flow and Optimal Transport} 
\label{RF and OT prelim}
Given a solution to (\ref{RF-eq}) on some time interval $t \in [0, T]$, as in \cite{Perelman-entropy}, let $\tau := T-t$ denote the backwards time  parameter. Note that  $\del/ \del \tau = - \del / \del t$. The family of metrics $g(\tau)$ is said to satisfy the \emph{backwards} Ricci flow, \be
\label{bRF-eq}
\frac{\partial g}{\partial \tau} = 2\Ric(g(\tau)).
\ee

A family of measures  $\mu(\tau)$ is called a diffusion if $d\mu(\cdot, \tau) := u(\cdot, \tau) \dvol_{g(\tau)}$ and $u(\cdot, \tau)$ satisfies the conjugate heat equation 
\[
\frac{\partial u}{ \partial \tau } =  \Delta_{g(\tau)} u - \left(\frac{1}{2} \tr \frac{\partial g}{\partial \tau}\right) u.
\]
Note that since the Riemannian volume form $\dvol_{g(\tau)}$ evolves according to
\be
\label{vol evolve}
\frac{\partial}{\partial \tau}\dvol_{g(\tau)} = \frac{1}{2}\left(\tr \frac{\partial g}{\partial \tau}\right)\dvol_{g(\tau)},
\ee
the additional term in the conjugate heat equation is needed to ensure that the measures $\nu(\tau)$ remain probability measures throughout the flow. Taking into account (\ref{bRF-eq}), we can then write the conjugate heat equation as  
\be
\label{conj heat eqn}
\frac{\partial u}{ \partial \tau } =  \Delta_{g(\tau)} u - R_{g(\tau)}u.
\ee
Here $R_{g(\tau)} = \tr \Ric(g(\tau))$ denotes the scalar curvature. We denote the \emph{conjugate heat operator} acting on functions $u: M \times [0,T] \to \mathbb{R}$ by 
\be
\label{conj heat operator}
\square^* u := \frac{\del }{\del \tau}u - \Delta_{g(\tau)}u + R_{g(\tau)}u.
\ee

In \cite{McCann-Topping}, McCann-Topping prove contractivity  of the Wasserstein distance between two diffusions on a manifold evolving by the {\it backwards} Ricci flow. Note that, in this setting, the Wasserstein distance is measured with respect to the changing metric $g(\tau)$. That is,
\be
W_p(\mu_1(\tau), \mu_2(\tau), \tau) := \left[\inf_{\pi \in \Gamma(\mu_1,\mu_2)} \int_{X\times X} d_{g(\tau)}(x,y)^p d\pi(x,y)\right]^{1/p}, \quad \text{ for } p \geq 1,
\ee
where $d_{g(\tau)}$ denotes the distance induced by the Riemannian metric $g(\tau)$; compare with (\ref{W distance}). They show 

\begin{thm}[c.f. \cite{ToppingOTNotes}]
\label{thm-McCannTopping}
	Given a compact oriented manifold $M$ equipped with a smooth family of metrics $g(\tau)$, for $\tau \in [\tau_1 , \tau_2]$, the following are equivalent
\begin{enumerate}[(i)]
\item	 $g(\tau)$ is a super-solution to the Ricci flow (parameterized backwards in time); i.e.\\
	 \[\frac{\partial g}{\partial \tau} \le 2 \Ric (g(\tau)).\]
	
\item For $\tau_1 < a < b < \tau_2$ and two diffusions $\mu_1(x,\tau)$ and $\mu_1(x,\tau)$ on $M$, the distances 
	\be
		W_1 \left( \mu_1(\tau) , \mu_2(\tau), \tau \right) \quad \text{ and } \quad W_2 \left( \mu_1(\tau) , \mu_2 (\tau), \tau \right)
	\ee
are nonincreasing functions of $\tau \in (a,b)$. \\

\item	For $\tau_1 < a < b < \tau_2$, and $f : M \times (a,b) \to \R$ which solves  $ - \frac{\partial f}{\partial \tau} = \Delta_{g(\tau)} f$, the Lipschitz constant of $f(\cdot, \tau)$ with respect to $g(\tau)$ given by
	\be
		\Lip (f(\cdot,\tau)) := \sup_M |\nabla f(\cdot,\tau)|,
	\ee
is nondecreasing on $(a,b)$.
\end{enumerate}	
\end{thm}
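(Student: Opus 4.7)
The plan is to establish the cycle (i) $\Rightarrow$ (iii) $\Rightarrow$ (ii) $\Rightarrow$ (i). The key ingredient is a Bochner-type evolution for the squared gradient of a backward-heat solution, combined with Kantorovich duality to convert the Lipschitz estimate in (iii) into the Wasserstein contractions in (ii); the reverse direction is then obtained by probing with concentrated diffusions.

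For (i) $\Rightarrow$ (iii), let $f:M\times(a,b)\to\R$ satisfy $-\partial_\tau f=\Delta_{g(\tau)}f$ and set $h:=|\nabla f|^2_{g(\tau)}$. Differentiating $h$ using the evolution of $g^{-1}$ and of $f$ gives
\[
\partial_\tau h \;=\; -(\partial_\tau g)(\nabla f,\nabla f) \;-\; 2\langle\nabla\Delta f,\nabla f\rangle.
\]
Bochner's identity replaces $2\langle\nabla\Delta f,\nabla f\rangle$ by $\Delta h-2|\nabla^2 f|^2-2\Ric(\nabla f,\nabla f)$, yielding
\[
(\partial_\tau+\Delta)h \;=\; (2\Ric-\partial_\tau g)(\nabla f,\nabla f) \;+\; 2|\nabla^2 f|^2.
\]
Under (i), the right-hand side is non-negative. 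At a spatial maximum of $h$ we have $\Delta h\le 0$, so $\partial_\tau h\ge 0$, and the parabolic maximum principle gives $\sup_M h(\cdot,\tau)$ non-decreasing in $\tau$, which is (iii).

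For (iii) $\Rightarrow$ (ii), I invoke Kantorovich--Rubinstein duality. Given $a<b$ in $(\tau_1,\tau_2)$ and $\varepsilon>0$, pick $\varphi_b$ with $\Lip_{g(b)}\varphi_b\le 1$ almost realizing the supremum in the dual formula for $W_1(\mu_1(b),\mu_2(b),b)$, and solve $-\partial_\tau\varphi=\Delta_{g(\tau)}\varphi$ backward on $[a,b]$ from $\varphi_b$; by (iii), $\Lip_{g(a)}\varphi(\cdot,a)\le\Lip_{g(b)}\varphi_b\le 1$. A direct calculation using the defining equation of a diffusion together with \eqref{vol evolve} shows that $\int\varphi\,u_i\,\dvol_{g(\tau)}$ is $\tau$-independent; pairing at $\tau=a$ and $\tau=b$ therefore gives
\[
W_1(\mu_1(b),\mu_2(b),b)-\varepsilon \;\le\; \int\varphi(\cdot,a)\,d(\mu_1(a)-\mu_2(a)) \;\le\; W_1(\mu_1(a),\mu_2(a),a),
\]
and sending $\varepsilon\to 0$ yields the $W_1$ monotonicity. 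The $W_2$ case follows by the same scheme with the Kantorovich potential propagated by the Hopf--Lax semigroup, whose Hamilton--Jacobi subsolution property translates control of $|\nabla\varphi|$ into control of the quadratic transport cost.

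The main obstacle is (ii) $\Rightarrow$ (i), which requires extracting \emph{pointwise} information on $2\Ric-\partial_\tau g$ from global Wasserstein monotonicity. I would localize: fix $\tau_0\in(\tau_1,\tau_2)$, $x_0\in M$, and a unit vector $v\in T_{x_0}M$, and take two diffusions whose time-$\tau_0$ densities are smoothed Dirac masses of scale $\delta$ centered at $x_0$ and $\exp_{x_0}(\epsilon v)$. As $\delta\to 0$, $W_p(\mu_1(\tau_0),\mu_2(\tau_0),\tau_0)\to\epsilon$ for $p=1,2$. Running the diffusions slightly in $\tau$ and Taylor-expanding $W_p$, the leading correction beyond the fixed-metric heat-flow contraction is proportional to $(\partial_\tau g-2\Ric)(v,v)$ at $(x_0,\tau_0)$, with lower-order remainders in $\delta,\epsilon$. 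Monotonicity from (ii) forces this leading correction to be non-positive, and since $x_0$, $v$, and $\tau_0$ are arbitrary, this yields (i). The delicate step is making the expansion uniform enough in $\delta$ to isolate the super-solution defect cleanly, accomplished by comparing against the heat-flow behaviour on the static background $(M,g(\tau_0))$ so that only the infinitesimal deviation $\partial_\tau g-2\Ric$ survives to leading order, as in the McCann--Topping argument.
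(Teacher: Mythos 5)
The paper does not prove Theorem~\ref{thm-McCannTopping}; it is quoted as a black box from McCann--Topping and Topping's lecture notes, so there is no ``paper's own proof'' to compare against. What you have written is a faithful reconstruction of the argument in that reference. The direction (i) $\Rightarrow$ (iii) is carried out correctly and in full: your Bochner computation gives
$(\partial_\tau + \Delta)h = (2\Ric - \partial_\tau g)(\nabla f,\nabla f) + 2|\nabla^2 f|^2$
with the right signs (note $\partial_\tau f = -\Delta f$ produces the $-2\langle\nabla\Delta f,\nabla f\rangle$ term, and $\partial_\tau g^{ij}=-g^{ik}(\partial_\tau g_{kl})g^{lj}$ the $-(\partial_\tau g)(\nabla f,\nabla f)$ term), and rewriting $\partial_\tau h \ge -\Delta h$ as $\partial_t h \le \Delta h$ in forward time $t=-\tau$ makes the scalar parabolic maximum principle apply exactly as you claim, giving $\sup_M h$ nonincreasing in $t$, hence nondecreasing in $\tau$. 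The step (iii) $\Rightarrow$ (ii) for $W_1$ is also correct: the pairing $\int \varphi\, u_i\,\dvol_{g(\tau)}$ is conserved because the backward heat operator and the conjugate heat operator are $L^2(\dvol_{g(\tau)})$-adjoints, which is precisely what (\ref{conj heat eqn}) and (\ref{vol evolve}) are designed to achieve, and propagating a near-optimal Kantorovich potential backward from $\tau=b$ to $\tau=a$ together with (iii) gives the desired inequality.

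Two parts of your proposal are sketches rather than proofs, and you should be upfront that they are. The $W_2$ case of (iii) $\Rightarrow$ (ii) genuinely needs the Hopf--Lax/Hamilton--Jacobi machinery and the Kantorovich duality for quadratic cost, not merely an analogy with $W_1$; one must verify that the forward Hopf--Lax subsolution inequality for the potential is compatible with the backward heat propagation under the super-Ricci-flow hypothesis, which requires its own Bochner-type computation. Likewise, (ii) $\Rightarrow$ (i) via approximate Dirac masses is the right localization idea, and it is the one used in the literature, but the Taylor expansion of $W_p$ along coupled diffusions, the uniformity in the concentration parameter $\delta$, and the identification of the leading-order defect with $(\partial_\tau g - 2\Ric)(v,v)$ all demand care; as written it is a plan, not a derivation. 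Since the theorem is only cited in this paper, this level of sketch is appropriate, but you should not present the cycle (i)$\Rightarrow$(iii)$\Rightarrow$(ii)$\Rightarrow$(i) as fully established until the last arc is fleshed out.
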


Theorem \ref{thm-McCannTopping} gives a characterization of super-solutions to the Ricci flow (parameterized backwards in time) via contractivity of certain Wasserstein distance between diffusions on the evolving manifold. Thus, their work provides one direction in which to explore definitions of the Ricci flow for more general metric measure spaces. We adopt this heuristic in our argument. However, to do so, requires a more general notion of diffusions and a definition of the conjugate heat operator (\ref{conj heat operator}) which uses only the  metric-measure properties of the space. We do this in the following section.
 
\subsection{Constructing the Conjugate Heat Kernel}
\label{sec-conj heat kernel}
Following the work of von Renesse-Sturm \cite{Sturm-Von}, given a family of metrics $g(\tau)$ satisfying (\ref{bRF-eq}), in this subsection, we show how to define the conjugate heat operator (\ref{conj heat operator}) as a limit of operators defined using the metric-measure properties of the space. \\

In Section 4.1 of \cite{Sturm-Von}, von Renesse-Sturm show how the heat kernel of a fixed Riemannian manifold $(M,g)$ may be obtained by applying the Trotter-Chernov product formula to a limit of a family of Markov operators which are defined using only metric-measure properties of $M$. Namely, define a family of Markov operators $\sigma_r$ acting on the set $\mathcal{F}_B$ of bounded Borel measurable functions  by $\sigma_rf(x) = \int_M f(y)~ d\sigma_{r,x}(y)$, where the measure $\sigma_{r,x}$  is given by
\be 
\label{sigma-rx} 
\sigma_{r,x} (A) := \frac{\Haus^{n-1}(A \cap \partial B(x,r))}{\Haus^{n-1}(\partial B(x,r))}, \quad \text{ for Borel measurable sets } A \subset M.
\ee
Generalizing this argument to a time-dependent family of operators, in \cite{Lakzian-Munn} we show how to approximate $\Delta_{g(\tau)}$ relying  only  on the metric-measure properties of the space. \\

Namely, suppose $g(\tau)$ is a family of metrics on $M$ satisfying (\ref{bRF-eq}) for $\tau \in [0, T]$, $0<T<\infty$. As in (\ref{sigma-rx}), define the normalized Riemannian uniform distribution on spheres centered at $x \in (M, g(\tau))$ of radius $r>0$ by 
\be 
\label{sigmatrx} 
\sigma^{\tau}_{r,x} (A) := \frac{\Haus^{n-1}(A \cap \partial B^{\tau}(x,r))}{\Haus^{n-1}(\partial B^{\tau}(x,r))},
\ee
where $B^{\tau}(x,r)$ denotes the ball of radius $r$ centered at $x$ with respect to the metric $g(\tau)$. As before, setting $\sigma^\tau_rf(x) : =  \int_M f(y)~ d\sigma^{\tau}_{r,x}(y)$  it follows that for $f \in C^3((M, g(\tau))$, 
\be
\label{sigmataur}
\sigma^\tau_rf(x) = f(x) + \frac{r^2}{2n}\Delta_{g(\tau)}f(x) + o(r^2).
\ee

To obtain a similar description of the scalar curvature, recall that on a Riemannian manifold $(M,g)$, \cite[Theorem 3.1]{Gray} 
\be
	\frac{\mathcal{H}^{n-1} \left(\partial B^{\tau}(x,r) \subset M^n \right)}{\mathcal{H}^{n-1} \left(\partial B(0,r) \subset \R^n \right)} = 1 - \frac{r^2}{6n} R_{g(\tau)}(x) + o(r^4),
\ee
where as before, $R_g$ denotes the scalar curvature with respect to $g$. Define a family of $\tau$-dependent operators $\theta^{\tau}_r : \mathcal{F}_B \to \mathcal{F}_B$ by
\be
\label{thetataur}
	\theta^\tau_r f(x) := \frac{\mathcal{H}^{n-1} \left(\partial B^{\tau}(x,r) \subset M^n \right)}{\mathcal{H}^{n-1} \left(\partial B(0,r) \subset \R^n \right)} \Id\left( f(x)\right) = f(x) - \frac{r^2}{6n} R_{g(\tau)}(x) f(x) + o(r^4).
\ee

Combining (\ref{sigmataur}) and (\ref{thetataur}), we define a new operator $A^{\tau}_r: \mathcal{F}_B \to \mathcal{F}_B$ by
\be
	A^\tau_r := \frac{1}{4} \sigma^\tau_r + \frac{3}{4} \theta^\tau_r.
\ee
Note that for any $f \in \mathcal{F}_B$,
\be
	A^\tau_r f(x) = f(x) + \frac{r^2}{8n} \left( \Delta_{g(\tau)} - R_{g(\tau)}(x)    \right) f(x) + o(r^2).
\ee

For $\tau \neq 0$, let $F_\tau(t)$ be the strongly continuous contraction semigroup on $\mathcal{F}_B$ generated by \mbox{$(\Delta_{g(\tau)} - R_{g(\tau)}\Id)$}. By applying the classic Trotter-Chernov product formula, one gets for any $t \geq0$ 
\be
\label{Ftau}
	\left( A^\tau_{\sqrt{8nt/j}} \right)^j f(x) \xrightarrow{\quad j \to \infty \quad }  e^{t \left(\Delta_{g(\tau)} - R_{g(\tau)}\Id \right)} f(x).
\ee

Letting $\mathcal{L}(\mathcal{F}_B)$ denote the algebra of all bounded linear operators defined on $\mathcal{F}_B$, we then have, for each $\tau \in [0, T]$, functions $F_{\tau}: [0, +\infty) \to \mathcal{L}(\mathcal{F}_B)$, given by
\be
\label{Ftaut}
F_{\tau}(t) := e^{t(\Delta_{g(\tau)}-R_{g(\tau)}\Id)}.
\ee
Ultimately our goal is to use the operators $F_{\tau}(t)$ to describe solutions to the conjugate heat equation (\ref{conj heat eqn}). Suppose the function $u: M \times [0,T] \to \mathbb{R}$ solves the  initial value problem
\be \label{conj heat IVP}
\begin{cases}
\dfrac{d}{d\tau}u(x,\tau) \!\!\!&=~ \Delta_{g(\tau)}u(x,\tau) - R_{g(\tau)}u(x,\tau) \\
\quad u(x,0) \!&=~ f(x),
\end{cases}
\ee

and that we can write $u(x,\tau)$ as $U(\tau)f(x)$ for some family of bounded linear operators $\{U(\tau)\}_{0\leq \tau \leq T}$ which satisfy the usual strong continuity properties and composition laws; i.e.
\[\lim_{\tau \downarrow 0} U(\tau) = \Id \quad \text{ and } \quad U(\tau_2) \circ U(\tau_1) = U(\tau_1 + \tau_2).\] 

It follows from Vuillermot's generalized Trotter-Chernov product formula for time-dependent operators \cite{Vuillermot}  that 
\begin{lem}
Let $g(\tau)$ be a family of metrics on $M$ which satisfies (\ref{bRF-eq}) for $\tau \in [0,T]$. With $F_{\tau}(t)$ defined as in (\ref{Ftaut}), for any Borel measurable function $f \in \mathcal{B}(M)$, the solution $u(x,\tau)$ to (\ref{conj heat IVP}) is given by
\be
\label{IVP soln}
u(x,\tau) := U(\tau) f(x) = \lim_{m\to \infty}\prod^0_{i=m-1} F_{\frac{i}{m}\tau} \left(\frac{\tau}{m}\right) f(x).
\ee
and the limit converges in the strong operator topology of $\mathcal{L}(B)$\end{lem}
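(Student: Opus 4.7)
The plan is to verify the hypotheses of Vuillermot's generalized Trotter--Chernov product formula for time-dependent operators \cite{Vuillermot} and then apply that formula directly. The key point is that, although the generator $A(\tau) := \Delta_{g(\tau)} - R_{g(\tau)}\Id$ depends on $\tau$, for each \emph{fixed} $\tau \in [0,T]$ the operator $A(\tau)$ is a well-behaved elliptic operator on the compact manifold $M$ equipped with the smooth metric $g(\tau)$, and therefore generates the strongly continuous (self-adjoint) semigroup $F_\tau(t) = e^{t(\Delta_{g(\tau)} - R_{g(\tau)}\Id)}$ already constructed in (\ref{Ftau})--(\ref{Ftaut}).

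First I would record the standing regularity: since $g(\tau)$ is a smooth solution of the backwards Ricci flow (\ref{bRF-eq}) on the compact manifold $M$ for $\tau \in [0,T]$, the coefficients of $\Delta_{g(\tau)}$ and the potential $R_{g(\tau)}$ depend smoothly on $\tau$, are uniformly bounded on $M \times [0,T]$, and share the common core $C^\infty(M)$ independent of $\tau$. In particular the family $\{A(\tau)\}_{\tau \in [0,T]}$ is a stable family of generators of contraction-type semigroups on $\mathcal{F}_B$ (up to a uniform exponential growth factor coming from $\sup |R_{g(\tau)}|$), and the map $\tau \mapsto A(\tau)f$ is continuous for each $f$ in a common dense core. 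These are precisely the structural hypotheses that Vuillermot's theorem requires to guarantee the existence of a strongly continuous two-parameter evolution family $U(\tau_2,\tau_1)$ solving the non-autonomous Cauchy problem associated with $A(\tau)$.

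Next, standard parabolic theory for linear equations with smooth coefficients on a compact manifold produces a unique classical solution $u(x,\tau)$ to the initial value problem (\ref{conj heat IVP}) for any sufficiently regular $f$, and this solution can be written as $u(x,\tau) = U(\tau)f(x)$ with $U(\tau) := U(\tau,0)$ satisfying $\lim_{\tau \downarrow 0} U(\tau) = \Id$ and the two-parameter composition law $U(\tau_2,\tau_1)\circ U(\tau_1,0) = U(\tau_2,0)$. The content of Vuillermot's formula is then that this evolution family is recovered as the chronologically ordered product
\be
U(\tau) f(x) = \lim_{m \to \infty} \prod_{i = m-1}^{0} F_{\frac{i}{m}\tau}\!\left(\frac{\tau}{m}\right) f(x),
\ee
where the product is taken in decreasing order of $i$ so that one applies $F_{0}(\tau/m)$ first, then $F_{\tau/m}(\tau/m)$, and so on, matching the direction of the non-autonomous flow; the limit converges in the strong operator topology on $\mathcal{L}(\mathcal{F}_B)$. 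Extension from smooth $f$ to arbitrary bounded Borel $f \in \mathcal{B}(M)$ follows by the uniform boundedness of the discrete products (each $F_\tau(t)$ has operator norm controlled by $e^{t\sup_M |R_{g(\tau)}|}$) together with a density argument.

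The main obstacle, and essentially the only non-formal step, is verifying that Vuillermot's hypotheses genuinely apply to our time-dependent generator $A(\tau)$. The subtlety is that the underlying Hilbert space $L^2(M, \dvol_{g(\tau)})$ itself varies with $\tau$, so one must either view everything on a fixed reference space (e.g.\ $L^2(M, \dvol_{g(0)})$) via the smooth multipliers coming from (\ref{vol evolve}), or work on $\mathcal{F}_B$ directly and exploit the smoothness of $g(\tau)$ to check the joint continuity and stability conditions uniformly in $\tau \in [0,T]$. Once this identification is made carefully, the rest of the argument is a direct citation of \cite{Vuillermot}.
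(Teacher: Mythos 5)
Your proposal is correct and follows essentially the same route as the paper's own proof: both verify the structural hypotheses of Vuillermot's time-dependent Trotter--Chernov product formula (uniform exponential bound on $\|F_\tau(t)\|$, strong continuity of $t\mapsto F_\tau(t)$, identification of the strong derivative $F'_\tau(0)$ with the generator $\Delta_{g(\tau)}-R_{g(\tau)}\Id$ on a common dense domain, and the uniform-in-$\tau$ convergence to that generator) and then cite \cite{Vuillermot} directly. You are somewhat more explicit than the paper about the potential subtlety that the natural $L^2$ spaces vary with $\tau$ and about the density argument extending to bounded Borel $f$, but this is additional care within the same argument rather than a different approach.
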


\begin{proof}
To begin, note that $F_{\tau}(0) = \Id$ for every $\tau \in [0, T]$ and $t \mapsto F_{\tau}(t)$ is continuous on $[0,T]$ in the strong operator topology of $\mathcal{L}(\mathcal{F}_B)$. Furthermore, since both $t\Delta_{g(\tau)}$ and $t\Id$ are $C_0$ semigroups, there exists constants $c \geq0$ and $M \geq 1$ such that 
\[
\sup_{\tau \in [0,T]} \left|\left|F_{\tau}(t)\right|\right|_{\infty} \leq Me^{ct}.
\]
As in \cite{Vuillermot}, define the strong limit
\[
F'_{\tau}(0)f :=\lim_{t \to 0^+} \frac{F_{\tau}(t)f(x) -f(x)}{\tau}.
\] 
Note that $F'_{\tau}(0) = (\Delta_{g(\tau)}-R_{g(\tau)}\Id)$ and let $\D(F'_{\tau}(0))$ denote the linear set of all $f \in \mathcal{B}(M)$   such that $F'_{\tau}(0)f$ exists for every $\tau \in [0,T]$.\\

 Furthermore, $U(\tau) f \in \D(F'_{\tau}(0))$ for all $\tau \in (0, T]$, and for every $f \in \mathcal{F}_B$. Also,
 \[
 \lim_{t \to 0^+}\sup_{0 < \tau <T} \left|\left|\frac{F_{\tau}(t) U(\tau) f - U(\tau) f}{\tau} - F'_{\tau}(0)U(\tau)f \right|\right| = 0
 \]

Thus, it follows from \cite{Vuillermot} we have the limit (\ref{IVP soln}) and the limit converges in the strong operator topology on $\mathcal{F}_B$. 
\end{proof}

Rewriting $F_{\tau}(t)$ using (\ref{Ftau}) we can write (\ref{IVP soln}) as 
\begin{eqnarray*}
U(\tau) 	&=& \lim_{m\to \infty}\prod^0_{i=m-1} F_{\frac{i}{m}\tau} \left(\frac{\tau}{m}\right)\\
		&=& \lim_{m\to \infty} \lim_{j\to \infty} \prod^0_{i=m-1}\left( A^{\frac{i}{m}\tau}_{\sqrt{\frac{8n\tau}{jm}}} \right).
\end{eqnarray*}
Motivated by this, we define the conjugate heat equation in the following way. Note, the definition requires only the metric-measure properties of the space. We have

\begin{defn}
\label{weakdiffusion}
Suppose $g(\tau)$ is a family of pseudo-metrics on $M$. We call a family of measures $\mu(\tau)$ a {\bf weak diffusion} if $d\mu(\cdot, \tau) := u(\cdot, \tau) \dvol_{g(\tau)}$ where 
\[
u(x,\tau) = \lim_{m\to \infty} \lim_{j\to \infty} \prod^0_{i=m-1}\left( A^{\frac{i}{m}\tau}_{\sqrt{\frac{8n\tau}{jm}}} \right) u(x,0).
\]
\end{defn}

In the next section, we will use this characterization of the conjugate heat equation to address diffusion of measures evolving through the Ricci flow on a sphere $\mathbb{S}^{n+1}$ which develops a neckpinch singularity of positive length. In short, we show that, under these circumstances, the diameter can not stay bounded while also assuming the contractility of diffusion measures in the Wasserstein distance, as indicated by McCann-Topping.

\section{Proof of Theorem \ref{thm-main-1}}
\label{sec-neckpinch control}
As we saw in Section \ref{sec-OT-Rot}, the optimal transport of rotationally symmetric probability measures in a rotationally symmetric setting reduces (by considering only the radial geodesics) to the optimal transport problem on $\R$ for which we have a rich theory.  In this Section we show how this fact can be used to estimate the Wasserstein distance between two such measures and ultimately prove our Theorem \ref{thm-main-1}.\\ 

\subsection{Framework.} 
\label{framework}
To begin, we describe the framework for our argument. Suppose $\sphere^{n+1}$ is equipped with an initial metric $g_0 \in \AK$, i.e. not necessarily reflection symmetric. It follows from \cite{AK1} that a Type 1 neckpinch singularity develops through the Ricci flow at some finite time $T < \infty$. Ultimately, we aim to prove that the singularity which develops occurs at a single point. By way of contradiction, we assume instead that the neckpinch develops on some interval of positive length. That is, using the notation from Section \ref{Neckpinch prelim},  we assume
\[
\lim_{t \nearrow T} \Rm(x,t) = \infty, \quad \forall ~x \in (x_1, x_2) \times \sphere^n, \text{ where } -1 < x_1 < x_2 <1.
\]
Set 
\[
r_i = r(x_i), \text{ for } i =1,2,
\]
and; recall, $r_{\pm} := r(\pm 1)$ where $r$ is defined by (\ref{r defn}).

At the singular time $t = T$, we view $(\sphere^{n+1}, g(T))$ as a metric space; or rather, a union of metric spaces (see Figure 1). \\

\begin{figure}[h!]
\label{fig}
\centering
\includegraphics[width=4.75in, height=4.5in]{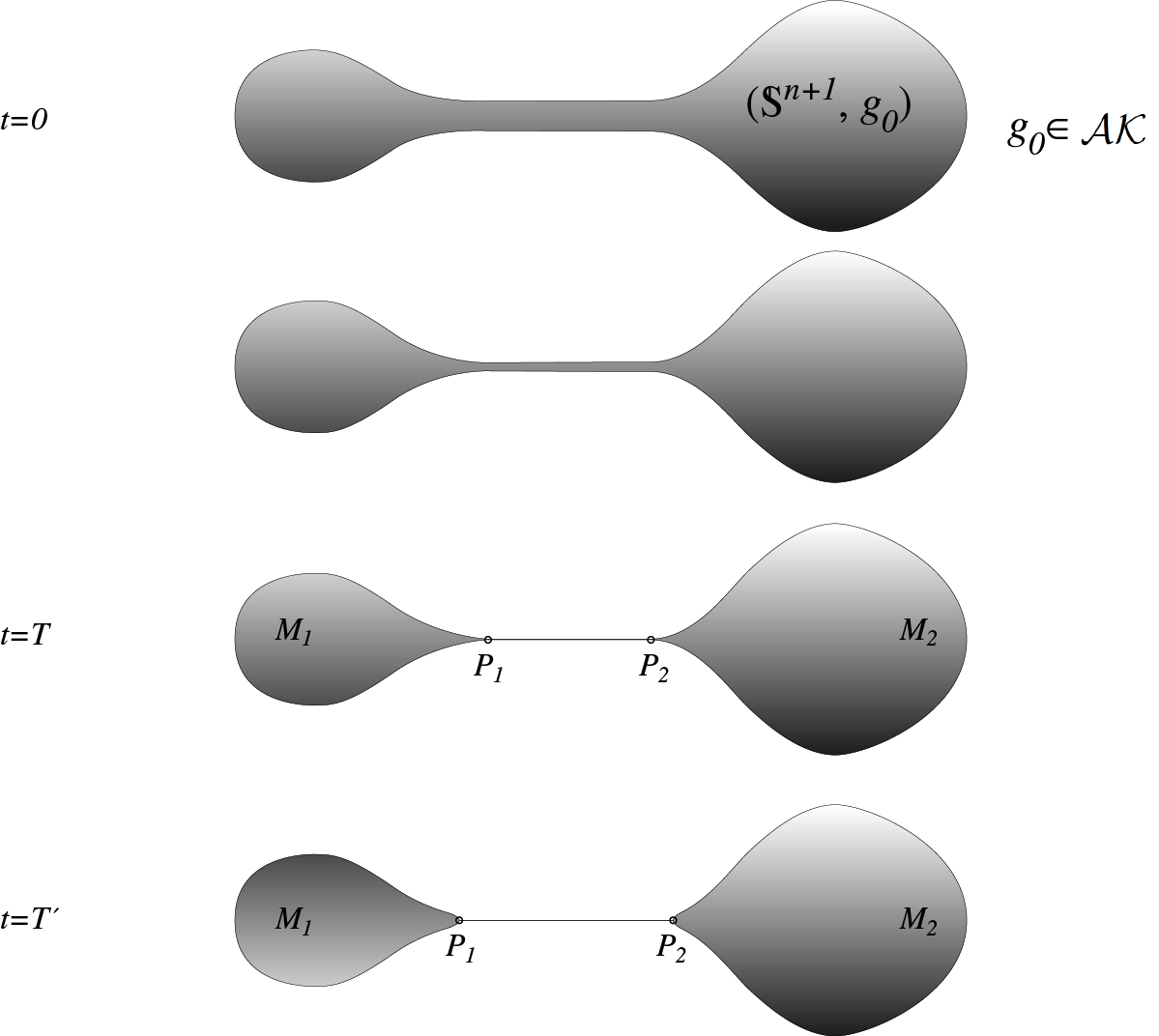}\\
  \caption{Development of neckpinch singularity.}
  \end{figure}

Let $(M_1, g_1)$ denote $(-1, x_1) \times \sphere^n$ with the metric $g_1 = dr^2 + \psi_1(r, T) g_{can}$, where $\psi_ 1= \left.\psi(r,T) \right|_{r\in(r_-, r_1)}$. That is, $g_1$ is the singular metric which arises through the Ricci flow as the limit of the metric $\left. g(t) \right|_{(r_-, r_1) \times \sphere^n}$ as $t \nearrow T$. Similarly, take  $(M_2, g_2)$ to be $(x_2, 1) \times \sphere^n$ with the metric $g_2 = dr^2 + \psi_2(r, T) g_{can}$, where $\psi_2 = \left.\psi(r,T) \right|_{r\in(r_2, r_+)}$. Note that while the metrics $g_1$ and $g_2$ can be extended smoothly to the boundaries $x = \pm 1$, they are {\it not} smooth for $x = x_1$ and $x=x_2$ (see \cite{AK1}). Label these points, which are now the new poles of the degenerate `spheres' $M_1$ and $M_2$ respectively, by  
\begin{align*}
P_1 &= \{x_1\} \times \sphere^n,\\
P_2 &= \{x_2\} \times \sphere^n.
\end{align*}

Laslty, let $[0,L]$ denote the interval of length $L := x_2 - x_1 >0$ with the usual distance metric. We can now consider $(\sphere^{n+1}, g(T))$ as the union
\[
\left( M_1 \sqcup M_2\right) \cup_h [0, L],
\]
which is obtained by first taking the disjoint union $\left( M_1 \sqcup M_2\right)$ and then identifying the boundary of $[0,L]$ to points in $\MM$ via a map $h: \{\{0\}, \{L\}\} \to\left( M_1 \sqcup M_2\right)$ where
\begin{eqnarray*}
h(0) &=& P_1 \in M_1 \\
h(L) &=& P_2\in  M_2.
 \end{eqnarray*} 

Using the work of Angenent-Caputo-Knopf  \cite{ACK}, it is possible to describe a smooth forward evolution of the metrics on the components $M_1$ and $M_2$ beyond the singular time. Employing Theorem \ref{thm-ACK}, taking $g_0$ to be the initial singular metric $g_1(T)$ on $M_1$, it follows that there exists a complete smooth forward evolution $(M_1, g_1(t))$ for $t \in (T, T_1)$ of $g_1(T)$ by the Ricci flow. Similarly, there also exists a complete smooth forward evolution $(M_2, g_2(t))$ for $t \in (T, T_2)$ of $g_2(T)$ on $M_2$. We set  $T' = \min\{T+T_1, T+T_2\}$.\\

In a similar way, for $t >T$, let $[0, L(t)]$ denote the interval of length $L(t)$ which joins $(M_1, g_1(t))$ and $(M_2, g_2(t))$. Note $L(T) = L$; and, for $T \leq t < T'$, the distance between two points  $x \in M_1$ and $y \in M_2$ is given by 
\[
d_{g_1(t)}(x,P_1) + L(t) + d_{g_2(t)}(P_2, y),
\]
where $d_{g_i(t)}$ denotes the distance metric on $M_i$ induced by $g_i(t)$. More generally, we can define a one-parameter family of distance metrics $D(t)$, for $t \in (T, T')$, on the connected metric space given, as before, by 
\[X:= (\MM) \cup_h [0,1].\]
Namely, take the distance between points $(x,y) \in X\times X$ to be  (using $dx$ to denote the usual distance metric on $I = [0,1]$)
\be
\label{D metric}
D(t)(x,y) :=
\begin{cases}
d_{g_1(t)}(x,y), 			& (x,y) \in M_1 \times M_1\\
d_{g_2(t)} (x,y),			& (x,y) \in M_2 \times M_2\\
| x - y|,				& (x,y) \in I \times I\\
d_{g_1(t)}(x, P_1) + L(t) + d_{g_2(t)}(y, P_2), 	& (x,y) \in M_1 \times M_2\\	
d_{g_i(t)}(x,P_i) + L(t) + |P_i - y|,			& (x,y) \in M_i \times I, ~i = 1,2.
 \end{cases}
\ee

Note that, defined this way, 
\[
\lim_{t \searrow T}~ d_{GH}\left( (X,D(t)), (\sphere^{n+1}, d_{g(T)})\right) \to 0.
\]

Also, by \cite{ACK}, as $t \searrow T$, the metrics $g_i(t)$ on $M_i$ are smooth and converge smoothly to $g(T)|_{M_i}$ on  open sets in $M_i \subset \sphere^{n+1}$. 
In other words, we have a one-parameter family of pseudo-metrics $g(t)$ on $\sphere^{n+1}$ defined for $t \in [0, T')$ which are in fact smooth Riemannian metrics on all of $\sphere^{n+1}$ for $t \in [0, T)$ as well as on the open sets $(-1,x_1) \times \sphere^n$ and $(x_2, 1) \times \sphere^n$ for $t \in (T, T')$. \\

To complete the argument, we ultimately examine the behavior of diffusions on $\sphere^{n+1}$ for these pseudo-metrics $g(t)$ for $t \in [0,T')$. To do so requires the metric characterization of diffusions we developed in Section \ref{sec-conj heat kernel}. To summarize, in Section \ref{diam estimates}, we show with Proposition \ref{diambound} that for certain diffusions on $\sphere^{n+1}$ the change in the cumulative distribution function can be made arbitrarily large. Using this fact, we can bound the change in the Wasserstein distance between between two such diffusions. In Section \ref{proof} we combine these facts to prove Theorem \ref{thm-main-1}.\\

Essential to our argument is that, in the construction above, diffusions on $X$ do not transport mass across the interval joining $M_1$ and $M_2$. To verify this, set $\tau := T' -t$ and let $\mu(\tau)$ be a (weak) diffusion on $(\sphere^{n+1}, g(\tau))$, for $\tau \in (0, T']$. Write $d\mu(x,\tau) = u(x,\tau) \dvol_{g(\tau)}(x)$ and note that, for $\tau \in (T, T']$, $\mu(\tau)$ is a smooth diffusion and $u(x,\tau)$ satisfies the conjugate heat equation (\ref{conj heat eqn}). For $\tau \in (0, T'-T]$, $\mu(\tau)$ is defined weakly and $u(x,\tau)$ is as in Definition \ref{weakdiffusion}. Naturally, these notions coincide when $g(\tau)$ is a smooth Riemannian metric, i.e. for $\tau \in (T'-T, T']$. 

\begin{lem}
\label{supp lemma}
For $i=1,2$, if $\supp(\mu(\tau_0)) \subset M_i$, for some $\tau_0 \in (0, T'-T]$; then, $\supp(\mu(\tau)) \subset M_i$, for all $\tau < \tau_0$.
\end{lem}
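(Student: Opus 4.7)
The strategy is to show that, for each $\tau' \in (0, T'-T]$, the conjugate-heat operator $U(\tau')$ from Definition \ref{weakdiffusion} preserves the property ``supported in $M_i$''. Granting this, the lemma follows from the semigroup identity $\mu(\tau_0) = U(\tau_0 - \tau)\mu(\tau)$ and mass preservation of the conjugate heat flow: if $\mu(\tau)$ had positive mass on $M_{3-i}$ for some $\tau < \tau_0$, the symmetric support-preservation statement applied to the $M_{3-i}$-portion of $\mu(\tau)$ would produce positive $M_{3-i}$-mass in $\mu(\tau_0)$, contradicting $\supp(\mu(\tau_0)) \subset M_i$. The interval $I$ is $\dvol_{g(\tau)}$-null, so it does not interfere with the decomposition $\mu(\tau) = \mu(\tau)|_{M_i} + \mu(\tau)|_{M_{3-i}}$.

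The key single-step assertion is that $A^{\tau'}_r$ preserves ``supported in $M_i$'' whenever $r$ is smaller than the length of the joining interval. By \eqref{D metric}, $D(\tau')(x, y) \geq L(T' - \tau')$ for every $x \in M_{3-i}$ and $y \in M_i$, so for such $r$ the sphere $\partial B^{\tau'}(x, r)$ misses $M_i$ entirely; consequently, for any $u$ supported in $M_i$ both $\sigma^{\tau'}_r u(x)$ and $\theta^{\tau'}_r u(x)$ vanish at $x \in M_{3-i}$, whence $A^{\tau'}_r u$ is itself supported in $M_i$. The potentially delicate case is $x \in M_i$ sitting close to the pole $P_i$, where $\partial B^{\tau'}(x, r)$ intersects the one-dimensional interval $I$; however that intersection has vanishing Hausdorff $n$-measure and so contributes nothing to the normalized surface measure $\sigma^{\tau'}_{r, x}$. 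It is precisely this codimension gap between the bridging interval and the ambient sphere that lets $I$ act as an impassable barrier for the weak diffusion.

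To promote this to $U(\tau')$, I use the Trotter--Chernov representation from Definition \ref{weakdiffusion},
\be
U(\tau') = \lim_{m \to \infty} \lim_{j \to \infty} \prod_{k=m-1}^{0} \left( A^{k\tau'/m}_{r_{jm}} \right)^{\!j}, \qquad r_{jm} = \sqrt{8n\tau'/(jm)}.
\ee
Since $L(\cdot)$ is continuous and strictly positive on the compact interval $[T, T']$, it admits a uniform lower bound $L_{\min} > 0$; for $j$ sufficiently large (uniformly in $k, m$), $r_{jm} < L_{\min}$, so every factor in the product preserves ``supported in $M_i$'', and hence so does each finite product. The main technical obstacle I anticipate is ensuring that this support invariance survives the strong-operator limit; the plan is to test $(A^{k\tau'/m}_{r_{jm}})^j u_0$ against a continuous cutoff $\phi$ equal to $1$ on an open neighborhood of $M_{3-i}$ and to $0$ on $M_i$, observe that $\int \phi\, (A^{k\tau'/m}_{r_{jm}})^j u_0 \,\dvol_{g(\tau')} = 0$ at each finite stage when $u_0$ is supported in $M_i$, and pass to the limit. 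A related subtlety --- the well-posedness of $\sigma^{\tau'}_{r, x}$ at points $x$ arbitrarily close to the conical singularity $P_i$ --- can be handled by approximating $x$ from the interior and invoking continuity of $A^{\tau'}_r u$ in $x$.
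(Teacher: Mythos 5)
Your proof is correct and rests on the same underlying geometric observation as the paper's --- the joining interval $I$ is an impassable barrier for the weak diffusion because of its codimension gap relative to $M_1$ and $M_2$ --- but you organize the argument in a cleaner way and, in fact, fill a small logical gap. The paper evaluates the operators $A^{\tau}_r$ at points $x$ in the interior of $I$, where geodesic spheres carry zero Hausdorff surface measure, concludes that the density $u$ stays zero on $I$ for $\tau < \tau_0$, and then asserts that ``mass is not transported across the interval'' without spelling out why vanishing on the $\dvol$-null set $I$ forces vanishing on all of $M_{3-i}$ at earlier $\tau$. You instead evaluate $A^{\tau}_r$ at $x \in M_{3-i}$, where for $r$ below the length of the interval the ball $B^{\tau}(x,r)$ misses $M_i$ entirely, so each $A^{\tau}_r$ --- hence each finite Trotter--Chernov product and the strong-operator limit $U(\tau')$ --- preserves ``supported in $M_i$''; combining this forward support-preservation with the propagator identity and conservation of mass then yields the backward-in-$\tau$ conclusion of the lemma. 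You also make explicit two subtleties that the paper passes over (carrying the support invariance through the strong operator limit via a cutoff test, and the well-posedness of $\sigma^{\tau}_{r,x}$ near the conical tips $P_i$), so your version is arguably the more complete of the two.
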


\begin{proof}
The lemma follows from examining the density function $u(x,\tau)$ on $X$ for $\tau < \tau_0$.\\

Let $x \in (x_1 , x_2) \times \sphere^n$, we have by assumption $u(x, \tau_0) = 0$. Without loss of generality suppose $D(\tau_0)(P_1, x) \leq D(\tau_0)(P_2, x)$. For any $r$ sufficiently small, namely $0 < r < D(\tau_0)(P_1, x)$, it follows from (\ref{sigmataur}) and (\ref{thetataur}) that $\sigma^{\tau_0}_r = \theta^{\tau_0}_r = 0$ as operators on $\mathcal{F}_B$. Thus, by definition, $A^{\tau_0}_r f(x) \equiv 0$, for any $f\in\mathcal{F}_B$.\

Therefore, it follows that 
\[
\lim_{j\to \infty} \left( A^{\tau_0}_{\sqrt{8nt/j}}\right)^j f(x) = 0. 
\]
By definition, $F_{\tau_0}(t) = 0 \in \mathcal{L}(\mathcal{F}_B)$ and therefore, $\left.\dfrac{\partial u}{\partial \tau}\right|_{\tau= \tau_0} = 0$. Furthermore, by Definition \ref{weakdiffusion} we have $u(x, \tau) = 0$ for all $\tau < \tau_0$ as well. Thus, mass is not transported across the interval and $\supp(\mu(\tau)) \subset M_i$ for all $\tau < \tau_0$.
\end{proof}

\subsection{Estimating the Diameter via Wasserstein Distance}
\label{diam estimates}
Let $g(\tau)$ be a family of metrics on $\sphere^{n+1}$ evolving by the backwards Ricci flow equation (\ref{bRF-eq}) and which develops a neck pinch singularity as discussed above. In this section, we show that there exists a diffusion $\nu(\tau)$ for which  rate of change of the Wasserstein distance $\frac{\partial}{\partial \tau} \int_{0}^{\diam(M_1)} d\nu(\tau)$ can be made as large as we want by taking diffusion that are close to Dirac $\delta$ distributions on $\sphere^{n+1}$. For now, we assume that the nonnegativity of the scalar curvature on $[\tau_1 , \tau_2]$ (which is not very restricting as we will see later.)

\begin{prop}
\label{diambound}
Let $(\sphere^{n+1}, g(\tau))$ evolve by (\ref{bRF-eq}) and suppose $R(\tau) \geq 0$ for all $\tau$. For any real number $M>0$ there exist diffusion $\nu(\tau)$ for which
\be 
\frac{\partial}{\partial \tau} \int_{0}^{\diam(M_1)}  F(r,\tau) \dr > M
\ee

where $F(r,\tau)$ 
is the commulative distributions of $\nu(\tau)$. 
\end{prop}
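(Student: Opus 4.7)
The plan is to use rotational symmetry to reduce the problem to a one-dimensional question about the mean radial position of the diffusion, and then exploit the degeneration of the warping function $\psi$ near the pinch point to force this mean to move at an arbitrarily large rate.

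First, restrict to an axially symmetric diffusion $\nu(\tau)$ supported in $M_1$; by Lemma \ref{supp lemma} this support condition is preserved in $\tau$. Writing $d\nu(\tau) = u(\cdot,\tau)\dvol_{g(\tau)}$ with $u$ solving the conjugate heat equation, the radial pushforward of $\nu(\tau)$ has density $\tilde u(r,\tau) = |\sphere^n|\psi(r,\tau)^n u(r,\tau)$ against $\dr$. A Fubini/Tonelli swap gives
\begin{equation*}
\int_0^{\diam(M_1)} F(r,\tau)\,\dr = \diam(M_1) - E_{\nu(\tau)}[r].
\end{equation*}
Since Theorem \ref{thm-ACK} ensures $\partial_\tau \diam(M_1)$ is bounded on compact subintervals of the smooth regime, it suffices to make $-\partial_\tau E_{\nu(\tau)}[r]$ exceed any prescribed bound.

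Combining $\partial_\tau u = \Delta_{g(\tau)} u - Ru$ with $\partial_\tau \dvol_{g(\tau)} = R\,\dvol_{g(\tau)}$ yields $\partial_\tau d\nu(\tau) = \Delta_{g(\tau)} u\,\dvol_{g(\tau)}$, so
\begin{equation*}
\partial_\tau E_{\nu(\tau)}[r] = \int_{M_1} r\,\Delta_{g(\tau)} u\,\dvol_{g(\tau)}.
\end{equation*}
Using the warped-product form $\Delta u = u_{rr} + n(\psi_r/\psi)u_r$ for a radial function and integrating by parts, with boundary terms vanishing because $\psi = 0$ at both endpoints of $M_1$, this collapses to $-|\sphere^n|\int_0^{\diam(M_1)} \psi(r,\tau)^n u_r(r,\tau)\,\dr$. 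Formally evaluating on a properly normalized Dirac $\delta_{r_0}$ produces the value $n\,\psi_r(r_0,\tau)/\psi(r_0,\tau)$. As $r_0$ is chosen close to the pinch point $P_1\in M_1$ (and off any local minimum of $\psi$), $\psi(r_0,\tau)\to 0$ while $|\psi_r(r_0,\tau)|$ stays bounded away from zero, so this ratio becomes arbitrarily large in magnitude and of definite sign, making $-\partial_\tau E_{\nu(\tau)}[r]$ arbitrarily large and positive.

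Given $M>0$, choose $r_0$ so that the formal Dirac value exceeds $2(M+C)$, where $C$ bounds $|\partial_\tau \diam(M_1)|$ on a compact time interval. Approximate $\delta_{r_0}$ by a narrow axially symmetric bump and evolve it via the conjugate heat equation; by a standard concentration argument, for sufficiently small bump width the actual derivative $\partial_\tau\int_0^{\diam(M_1)} F\,\dr$ lies within $M$ of the Dirac value, yielding the desired bound. I expect the main technical obstacle to be precisely this Dirac-limit step: one must uniformly control the boundary contributions and the lower-order $u_{rr}$-type corrections in the integration by parts as the bump narrows, simultaneously with the rapidly collapsing $\psi$ near the pinch, and also verify via Lemma \ref{supp lemma} that the approximating bumps indeed stay inside $M_1$ throughout the relevant $\tau$-interval.
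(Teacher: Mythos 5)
Your overall strategy -- concentrate the diffusion where $\psi$ is small while $\psi_r$ is bounded away from zero, so that the term $n\psi_r/\psi$ blows up -- is exactly the heart of the paper's argument, and you correctly isolate $\int_M u_r \dvol = -n\int (\psi_r/\psi)\,d\nu$ as the quantity driving the estimate. Your packaging via $\int_0^{\diam M_1} F\,\dr = \diam(M_1) - E_{\nu}[r]$ is a cleaner way to organize the computation than the paper's direct differentiation of $\int F\,\dr$, and the paper likewise ends up expressing everything in terms of $F_{rr} - H F_r + F\,\Ric(\partial_r,\partial_r)$ and then choosing an explicit cutoff $F$ rather than approximating a Dirac. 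So the route is genuinely different but aims at the same key term.

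There are, however, two substantive problems. First, the identity $\partial_\tau E_{\nu(\tau)}[r] = \int_{M_1} r\,\Delta_{g(\tau)} u\,\dvol$ is incomplete: the radial coordinate $r = r(x,\tau)$ itself evolves under the flow (since $\partial_\tau\phi = \phi\,\Ric(\partial_r,\partial_r)$), and the omitted term is $\int (\partial_\tau r)\,u\,\dvol = \int_0^{\diam M_1}\Ric(\partial_r,\partial_r)(1-F)\,\dr$. This term is what turns your expression into the paper's exact identity $\partial_\tau\int F\,\dr = \int F\,\Ric(\partial_r,\partial_r)\,\dr + \int u_r\,\dvol$; your version $\partial_\tau\diam + \int u_r\,\dvol$ differs from the truth by $\int(1-F)\Ric\,\dr$ and is therefore an overestimate. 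Because you are trying to certify a lower bound $> M$, an overestimate is the wrong direction; the argument is saved only because $\Ric \geq 0$ makes the correction positive (giving $\partial_\tau\int F\,\dr \geq \int u_r\,\dvol$), but this needs to be said explicitly rather than having the term dropped.

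Second, and more serious, your choice to concentrate near the pinch point $P_1$ is precisely the choice the paper avoids. At pre-singular times $t<T$, $P_1$ is adjacent to the neck, where $\psi$ has an interior local minimum and $\psi_r$ vanishes and is small throughout a neighborhood; at $t=T$ the singular asymptotics of Angenent--Knopf give $\psi_r \to 0$ as $r\to r_1$; only in the post-singular ACK forward evolution ($t>T$) does $P_1$ become a smooth pole with $\psi_r\to\mp 1$. Your claim that $|\psi_r(r_0,\tau)|$ stays bounded away from zero near $P_1$ therefore fails exactly in the regime where you compute with the smooth conjugate heat equation, and the "off any local minimum" caveat does not rescue it since $\psi_r$ is uniformly small near the neck. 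The paper instead concentrates the diffusion near the \emph{original} smooth pole $r_{\max}$ of $\sphere^{n+1}$, where the pole regularity conditions $\psi(r_{\max})=0$, $\psi_r\to -1$ are built in and $\Ric(\partial_r,\partial_r)$ remains bounded, so that the cutoff construction of $F$ and the resulting estimate $(1-\lambda)\tfrac{n+1}{n\,2^{n+1}\epsilon}$ are completely controlled. To repair your approach you would either need to relocate the concentration point to the original pole (matching the paper), or work entirely in the post-singular weak-diffusion regime near the new pole, re-justifying the integration by parts and the Dirac approximation in that setting.
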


Before we begin the proof, we recall some notation and computations done by Angenent-Knopf \cite{AK1} in \cite{AK1, AK2}. As mentioned in Section \ref{Neckpinch prelim}, setting $r$ to denote the distance from the equator, the Ricci tensor of $g$ in these geometric coordinates is given by
\be
\label{Ricci}
\Ric = -n\frac{\psi_{rr}}{\psi} dr^2 + \left(-\psi\psi_{rr} - (n-1)\psi_r^2 + n-1 \right) g_{can}.
\ee
Furthermore, it follows that the evolution a one-parameter family of metrics $g(t)$ according to the Ricci flow (\ref{RF-eq}) is equivalent to the coupled system
\be
\label{phipsi evolve}
\begin{cases}
\psi_t = \psi_{rr} - (n-1)\frac{1-\psi_r^2}{\psi} \\
\phi_t = \frac{n\psi_{rr}}{\psi}\phi,
\end{cases}
\ee
where the derivative with respect to the parameter $r$ is given by $\frac{\partial}{\partial r}= \frac{1}{\phi(x,t)} \frac{\partial}{\partial x}$ and $dr = \phi(x) dx$. \\

{\it Proof of Proposition \ref{diambound}.}
Let $\nu(\tau) = u(r, \tau) \mathrm{d}vol $ be a diffusion on $(M, g(\tau))$ and thus satisfies the conjugate heat equation (\ref{conj heat eqn}).  Computing from the pole $P_+ \in \sphere^{n+1}$, the cumulative distribution function of $\nu(\tau)$ in terms of $x$ is:
\be
\label{F defn}
		F(x,\tau) =\int_{S^n} \int_0^x \; u(y , \tau) \dvol_{(y,\tau)}. 
\ee
By the divergence theorem, and keeping in mind (\ref{vol evolve}),
\begin{align}
	\frac{\partial}{\partial \tau} F(x, \tau) 	&= \int_{S^n}  \int_0^x \; \left(\frac{\partial u}{\partial \tau}+ R u\right) \dvol , \\ 
								&= \int_{S^n}  \int_0^x \; \Delta u \dvol, \\ 
								&= \int_{\{x\} \times S^n} \; \left< \nabla u , \bf{n} \right> \dsigma ,
\end{align}
where $\bf{n}$ denotes the outward unit normal to the hypersurface $\left\{x \right\} \times \sphere^n$ and $\mathrm{d}\sigma$ its area form. Since the metrics $g(x, \tau)$ and the solution $u(x,\tau)$ are rotationally symmetric, we have $\n = \frac{\partial}{\partial r}= \frac{1}{\phi(x,t)} \frac{\partial}{\partial x}$ and $\nabla u = u_r \frac{\partial}{\partial r} = \frac{u_x}{\phi^2(x)} \frac{\partial}{\partial x}$. Hence, 
\be
	\frac{\partial}{\partial \tau} F(x, \tau) =\int_{\left\{ x \right\} \times \sphere^n} \; \frac{u_x}{\phi^3(x)}\left| \frac{\partial}{\partial x} \right|^2 \dsigma = \int_{\left\{ x \right\} \times \sphere^n} \; \frac{u_x}{\phi(x)} \dsigma.
\ee
Which, in polar coordinates, gives
\be
\label{dFdtau}
	\frac{\partial}{\partial \tau} F(r, \tau) =\int_{\left\{ r \right\} \times S^n}  u_r  \dsigma.
\ee

Using these identities and the evolution equation for $\phi$ given by (\ref{phipsi evolve}). Setting $r_{max} = r(1) = \int_0^1 \phi(y) dy$, we compute
\begin{align}
	\frac{\partial}{\partial \tau} \int_{0}^{r_{max}}  F(r, \tau) \dr 
			&= \frac{\partial}{\partial \tau} \int_0^1  F(x,\tau)\phi(x, \tau) \; \mathrm{d}x   \\ 
			&=  \int_0^1  \;  \left( \frac{\partial  F}{\partial \tau} \phi  +  F\frac{\partial \phi}{\partial \tau} \right)  \dx \\ 
			& =   \int_0^1  \; \left( \frac{\partial  F}{\partial \tau}  -   n F\frac{\psi_{rr}}{ \psi} \right)\; \phi(x,\tau) \dx \\ 
			& = \int_{0}^{r_{max}} \left(  \frac{\partial  F}{\partial \tau}  -   n F\frac{\psi_{rr}}{ \psi} \right) \dr.
\end{align}
Using (\ref{dFdtau}) and (\ref{Ricci}), this can be written as
\begin{align}
	\frac{\partial}{\partial \tau} \int_{0}^{r_{max}}  F(r,\tau) \dr
		& = \int_0^{r_{max}}\int_{\left\{ r \right\} \times S^n} u_r  \dsigma \dvol  +  \int_{0}^{r_{max}} \; F(r,\tau) \Ric_{g(\tau)} \left(\frac{\partial}{\partial r} , \frac{\partial}{\partial r} \right)  \dr, \\  
		&= \int_M u_r  \dvol  +  \int_{0}^{r_{max}} \; F(r,\tau) \Ric_{g(\tau)}\left(\frac{\partial}{\partial r} , \frac{\partial}{\partial r} \right)  \dr, \\ 
		&=  \int_M u_r \dvol + \int_M \; \frac{F(r,\tau)}{\vol(\sphere^n) \psi^n(r,\tau)} \Ric_{g(\tau)}\left(\frac{\partial}{\partial r} , \frac{\partial}{\partial r} \right)  \dvol\\
		& = \int_M \left(u_r + \frac{F(r,\tau)}{\vol(\sphere^n) \psi^n(r,\tau)} \Ric_{g(\tau)}\left(\frac{\partial}{\partial r} , \frac{\partial}{\partial r} \right)\right) \dvol
\end{align}

Ultimately, we want to maximize this quantity. Note that the maximum will be positive because for diffusions whose initial condition is $\delta_{P_+}$ this quantity is positive (remember that our distribution is computed from south pole).\\

First, we try to find a function $F$ with the following properties:
\be
	u_r + \frac{F}{\vol(\sphere^n) \psi^n} \Ric \left(\frac{\partial }{ \partial r} ,  \frac{\partial }{ \partial r} \right) >0
\ee
where, $F(x,\tau)$ given by (\ref{F defn}) is the cumulative distribution of $\nu(x,\tau) = u(x,\tau) \dvol_{(x,\tau)}$. As such, we compute
\begin{align}
	F_r(x,\tau) = \frac{\partial }{\partial r} \int_0^x u(y, \tau) \dvol_{(y,\tau)}  
		&= \frac{1}{\phi(x,\tau)}\frac{\partial }{\partial x} \int_0^x u(y,\tau) \vol(\sphere^n)  \phi(y,\tau) \psi^n(y, \tau) \dy \\ 
		&= \vol(\sphere^n) \psi^n(x,\tau) u(x,\tau) \\ 
		& = \vol(\sphere^n) \psi^n(r,\tau) u(r,\tau).
\end{align}
To simplify notation, set $c = \vol(\sphere^n)$, so then
\be
	u(r,\tau) = \frac{F_r}{c \psi^n}.
\ee
Therefore,
\be
	u_r = \frac{\left(c \psi^n \right)F_{rr} - n c \psi^{n-1} \psi_r F_r}{c^2 \psi^{2n}} = \frac{1}{c \psi^n} \left( F_{rr} - n \frac{\psi_r}{\psi} F_r   \right).
\ee
Hence, again by (\ref{Ricci}),
\begin{align}
	u_r + \frac{F}{\vol(\sphere^n) \psi^n} \Ric \left(\frac{\partial}{ \partial r} ,  \frac{\partial}{ \partial r} \right) &= \frac{1}{c \psi^n} \left( F_{rr}  - n  \frac{ \psi_r}{\psi} F_r - n \frac{\psi_{rr}}{\psi} F  \right) \\ & = \frac{1}{c \psi^n} \left( F_{rr}  - H(r,\tau) F_r  + \Ric \left( \frac{\partial}{\partial r} , \frac{\partial}{\partial r} \right) F  \right);
\end{align}
in which, $H(r, \tau)$ denotes the mean curvature of geodesic spheres of radius $r$ in $(M, g(\tau))$.
This shows that
\begin{align}
\label{eq-to-maxomize}
	\int_M u_r + \frac{F}{\vol(\sphere^n) \psi^n} \Ric \left(\frac{\partial }{ \partial r} ,  \frac{\partial }{ \partial r} \right)  \dvol 
		& = \int_0^{r_{max}} \; \frac{1}{c \psi^n} \left( F_{rr}  - n  \frac{ \psi_r}{\psi} F_r - n \frac{\psi_{rr}}{\psi} F  \right) c \psi^n \dr,  \\ 
		&=  \int_0^{r_{max}} \;  \left( F_{rr}  - n  \frac{ \psi_r}{\psi} F_r - n \frac{\psi_{rr}}{\psi} F  \right) \dr, \\ 
		&= \int_0^{r_{max}} \; \left(  F_{rr} -   H(r, \tau) F_r + F \Ric \left(\frac{\partial}{\partial r} ,  \frac{\partial}{\partial r} \right)\right) \dr.
\end{align}
So finding an estimate boils down to finding a nondecreasing $C^{1,1}$ function $F$ on $[0, r_{max}]$ such that
\be
	F(0,\tau ) = 0\quad \text{ and } \quad F(r_{max}, \tau) = 1,
\ee
and such that $F$ maximizes (\ref{eq-to-maxomize}) (in the ideal case) or, at least, makes (\ref{eq-to-maxomize}) very large. 

From these equations, it seems like we need to choose $F$ in such a way that $H(r, \tau) F_r$ is finite. Since $H(r,\tau) = n \frac{\psi_r}{\psi}$ and $u(r, \tau) = \frac{F_r}{c \psi^n}$,  one way to get an estimate is to take $F_r $ proportional to $\psi^n$. Let $\beta(r)$ be a cut-off function with $\operatorname{supp}(\beta) \subset (r_{max} - 2\delta , r_{max}]$, for some $\delta >0$, and define $F_r$ as the follows:
\[
F_r =
\begin{cases}
            \beta(r) \psi^n(r,\tau) ~~ 	& \text{ for }r_{max} - 2\delta \le r\le r_{max} \\\\
            0  					& \text{ otherwise}.
\end{cases}
\]
This means that we are taking $u = \frac{\beta}{c}$. Then,
\be
	-n\frac{\psi_r}{\psi}F_r = - n \psi^{n-1}\psi_r.
\ee
We will also have
\be
	\int_0^{r_{max}} \; F_{rr}\; \mathrm{d}r = F_r \left( r_{max}, \tau \right) - F_r \left( 0 , \tau\right) = 0.
\ee
Now it only remains to find proper $\delta>0$ and $\beta(r)$. We from the conditions imposed on $\psi$ that  
\be
	\psi(r_{max}) =0 \quad \text{ and } \quad \lim_{r \to r_{max}^-} \psi_r = -1;
\ee
therefore, for any small enough $\delta>0$, we can assume that, 
\be
	0 \le \psi \le 2 \epsilon, \quad \text{ on } [r_{max} - 2\delta , r_{max}],
\ee
and 
\be
	-1 \le \psi_r \le -1 + \lambda, \quad \text{ on } [r_{max} - 2\delta , r_{max}].
\ee
Note that $\lambda$ is small and depending on $\delta$; which means that $\psi$ is strictly monotone on $[r_{max} - 2\delta , r_{max}]$. Now let $\gamma(\rho)$ be a nonegative cut-off function such that $\operatorname{supp}(\gamma) \subset [0 , 2\epsilon)$ and define
\be
	\beta(r) = \frac{- \gamma\left(\psi \right) \psi_r}{\int_{r_{max} - 2\delta}^{r_{max}} \; \gamma\left(\psi \right) \psi^n \psi_r \dr} = \frac{ - \gamma\left(\psi \right) \psi_r}{\int_0^{2 \epsilon} \; \gamma\left( \rho \right) \rho^n  \; \mathrm{d} \rho}.
\ee
Hence,
\be
	F(r) = \int_0^r \; F_r \dr = \frac{\int_0^{r_{max} - 2\delta} \; - \gamma\left(\psi \right) \psi^n \psi_r \dr}{\int_0^{2 \epsilon} \; \gamma \left( \rho \right) \rho^n  \; \mathrm{d} \rho}, ~~\text{ for } r \in  [r_{max} - 2\delta , r_{max}],
\ee
and $F(r) = 0$, for $r \in [0 , r_{max} - 2\delta]$. So,
\be
	\left| \int_0^{r_{max}} \; F \Ric \left(\frac{\partial}{\partial r} , \frac{\partial}{\partial r} \right)\; \mathrm{d}r \right| \le 2\delta \; || \Ric  ||^2,
\ee
and
\be\label{eq-estimate-for-delta}
	\left(1-\lambda \right)\frac{\int_0^{2 \epsilon} \;  \gamma\left( \rho \right) \rho^{n-1} \; \mathrm{d} \rho}{\int_0^{2 \epsilon} \; \gamma \left( \rho \right) \rho^n  \; \mathrm{d} \rho} 			\le  \frac{\int_0^{r_{max}} \; n  \gamma\left( \psi \right) \psi^{n-1} \psi_r^2 \dr }{\int_0^{2 \epsilon} \; \gamma \left( \rho \right) \rho^n  \; \mathrm{d} \rho} 
			= \int_0^{r_{max}} - n \frac{\psi_r}{\psi} F_r \dr.
\ee
Let $\gamma(\rho)$ be a nonincreasing function such that
\[
\gamma(\rho) = 
\begin{cases}
1, \quad 	&\text{ for } \rho \in [0 , \epsilon],\\
0, 		&\text{ for } \rho \geq 2 \epsilon.
\end{cases}
\]

Then, we have
\be
	\left(1-\lambda \right)\frac{\int_0^{2 \epsilon} \;  \gamma\left( \rho \right) \rho^{n-1} \; \mathrm{d} \rho}{\int_0^{2 \epsilon} \; \gamma \left( \rho \right) \rho^n  \; \mathrm{d} \rho} 			\ge \left(1-\lambda \right)\frac{\int_0^\epsilon \rho^{n-1} \; \mathrm{d}\rho}{ \int_0^{2\epsilon} \; \rho^n \; \mathrm{d}\rho} 
			= \left(1-\lambda \right) \frac{\frac{\epsilon^n}{n}}{\frac{(2\epsilon)^{n+1}}{n+1}} 
			= \frac{n+1}{n 2^{n+1} \epsilon}
\ee
which goes to infinity as $\delta \to 0$.\\

This shows that for two diffusions $\nu_1(x , \tau)$ and $\nu_2(x , \tau)$ on $M_1$ and $M_2$ (resp.) such that for $i =1$ or $i=2$ (or both) we have
\be
	\lim_{\tau \to (T'-T)^+} \nu(x,\tau) = \delta_{p_i}; \text{ for } p_i \in M_i,
\ee
then, 
\be
	\frac{\partial}{\partial \tau} \int_{0}^{\diam(M_i)}  F_i(r,\tau) \dr ~~ \text{ is unbounded from below}.
\ee
\hfill \qed\\

Using this heuristic, we can now prove our main theorem. 

\subsection{Proof of Theorem \ref{thm-main-1}}
\label{proof}
In this section we prove our main theorem. Recall, $\AK$ denotes the class of smooth rotationally symmetric metrics on $\sphere^{n+1}$ which have the following properties: (1) positive scalar curvature everywhere, (2) positive sectional curvature on planes tangent to $\{x\} \times \sphere^{n}$, (3) they are ``sufficiently pinched'' meaning the minimum radius should be small relative to the maximum radius, and (4) positive Ricci curvature on the polar caps (the part from the pole to the nearest `bump').

\begin{thm}
Let $g_0 \in \AK$ be an $SO(n+1)$-invariant metric on $\sphere^{n+1}$ and $g(t)$ a solution to the Ricci flow for $t \in [0,T)$, up to some finite time $T<\infty$. Assuming the diameter remains bounded as $t \nearrow T$,  then the neckpinch singularity which develops at $t= T$ occurs only on the totally geodesic hypersurface  of $\{x_0\} \times \sphere^{n}$, for some $x_0 \in (-1,1)$. 
\end{thm}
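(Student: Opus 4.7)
My plan is to argue by contradiction and produce an axially symmetric diffusion whose cumulative distribution evolves at a rate the McCann--Topping contractivity of $W_1$ under backward Ricci flow cannot accommodate. Suppose then that the singularity develops on some interval $(x_1,x_2) \times \sphere^n$ of positive length $L = x_2 - x_1 > 0$.

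To set up, I would apply the framework of Section~\ref{framework}: use the Angenent--Caputo--Knopf forward evolution (Theorem~\ref{thm-ACK}) to extend the flow past $T$ on each post-singular component $M_1, M_2$ up to some $T' > T$, assemble the pseudo-metric space $(X, D(t))$ with $X = (M_1 \sqcup M_2) \cup_h [0,L(t)]$, and reparametrize by $\tau = T' - t$. This yields a family of pseudo-metrics $g(\tau)$ on $X$ which is a classical backward Ricci flow on each $M_i$ for $\tau \in [0, T'-T)$ and on $\sphere^{n+1}$ for $\tau \in (T'-T, T']$. On this family I would then choose a one-parameter family of axially symmetric weak diffusions $\nu_\epsilon(\tau)$, indexed by a concentration parameter $\epsilon > 0$, whose initial data $\nu_\epsilon(0)$ is supported in $M_1$ and matches the concentrated profile appearing inside the proof of Proposition~\ref{diambound}. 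By Lemma~\ref{supp lemma} the support of $\nu_\epsilon(\tau)$ stays inside $M_1$ for every $\tau \in [0, T'-T]$, so no mass ever crosses the interval $[0,L(t)]$.

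Next I would exploit the rotational symmetry on $M_1$: invoking Theorem~\ref{thm-rotsym-transport} together with the one-dimensional Wasserstein identity~(\ref{W1 vs L1 dist}) lets me identify the integral $\int_0^{\diam(M_1)} F_\epsilon(r,\tau)\,\dr$ from Proposition~\ref{diambound} with the $W_1$-distance between $\nu_\epsilon(\tau)$ and a delta concentrated at the pole of $M_1$ opposite to $P_1$ (approximated by an honest diffusion via a limiting argument). Applying Proposition~\ref{diambound} on the smooth backward Ricci flow $(M_1, g_1(\tau))$, for any preassigned $M > 0$ I obtain an $\epsilon > 0$ and a $\tau^\star \in (0, T'-T)$ with
\[
\frac{\partial}{\partial \tau} \int_0^{\diam(M_1)} F_\epsilon(r,\tau)\,\dr \bigg|_{\tau = \tau^\star} > M.
\]
Under the previous identification this is the $\tau$-derivative of a $W_1$-distance between two (weak) diffusions on $M_1$, and Theorem~\ref{thm-McCannTopping} forces it to be nonpositive. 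Since the standing diameter hypothesis ensures the integrand is finite and bounded, sending $M \to \infty$ produces the desired contradiction, ruling out a pinch on a positive-length interval.

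The main obstacle I expect is the limiting argument identifying $\int_0^{\diam(M_1)} F_\epsilon \,\dr$ with an honest diffusion-to-diffusion Wasserstein distance, since Theorem~\ref{thm-McCannTopping} is stated for smooth diffusions and must be applied to approximations of the delta by genuine diffusions uniformly in $\tau$; a secondary technical point will be verifying that the $R(g_1(\tau)) \geq 0$ hypothesis of Proposition~\ref{diambound} descends from the $\AK$ assumptions on $g_0$ to the smooth forward evolution on $M_1$, and making sure the weak-diffusion framework of Definition~\ref{weakdiffusion} glues across the singular time $\tau = T' - T$.
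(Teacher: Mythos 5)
The key idea in your proposal matches the paper's at the skeletal level (contradiction, McCann--Topping contractivity, Proposition~\ref{diambound}, Lemma~\ref{supp lemma}, the ACK forward evolution), but you discard the structural piece the paper's contradiction actually hinges on. The paper places \emph{two} weak diffusions $\nu_1$, $\nu_2$, one supported on each post-singular piece $M_1$, $M_2$ of $X$. Because their supports lie on opposite sides of the degenerate interval, the one-dimensional formula (\ref{W1 vs L1 dist}) splits as
\begin{equation*}
W_1(\nu_1(\tau),\nu_2(\tau),\tau) = L(\tau) + \int_0^{\diam(M_1)} F(r,\tau)\dr + \int_0^{\diam(M_2)} G(r,\tau)\dr,
\end{equation*}
with the interval length $L(\tau)$ appearing as an explicit separate summand. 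McCann--Topping forces the sum to be nonincreasing in $\tau$; Proposition~\ref{diambound} lets each integral grow arbitrarily fast; so $\partial_\tau L$ must be unboundedly negative, contradicting the assumed diameter bound. Your single-diffusion-on-$M_1$ setup has no $L(\tau)$ term, and $L(\tau)$ is the only slack variable the argument can contradict.

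Your proposed identification also does not produce the quantity you need. If $F_\epsilon$ is the cumulative distribution of $\nu_\epsilon$ measured from the smooth pole $P_+$ of $M_1$ (the ``pole opposite $P_1$''), then integrating by parts gives $W_1(\nu_\epsilon,\delta_{P_+}) = \diam(M_1) - \int_0^{\diam(M_1)} F_\epsilon\dr$, not $\int F_\epsilon\dr$. A large positive $\partial_\tau \int F_\epsilon\dr$ then makes this $W_1$ \emph{decrease}, in agreement with Theorem~\ref{thm-McCannTopping} --- no contradiction. The equality $\int_0^{\diam(M_1)} F\dr = W_1(\nu,\delta_{P_1})$ does hold when the reference point is the \emph{singular} pole $P_1$, but then you run into a more serious obstruction: on the honest smooth backward Ricci flow $(M_1,g_1(\tau))$, Theorem~\ref{thm-McCannTopping} is a rigorous theorem, so you cannot hope to contradict it by pairing $\nu_\epsilon$ with another genuine diffusion supported in $M_1$. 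A $\delta_{P_1}$ is not a diffusion, and any actual diffusion starting near $P_1$ in $M_1$ will spread into $M_1$ and restore the inequality. The paper sidesteps this precisely by putting the second measure on $M_2$: Lemma~\ref{supp lemma} pins $\nu_2$ across the interval, and the imbalance has to be absorbed by $L(\tau)$. You need the second diffusion, on the other component, not a nearby approximation in $M_1$. Your remaining flagged concerns (whether $R\geq 0$ survives the ACK forward evolution; how Definition~\ref{weakdiffusion} glues across $\tau = T'-T$) are legitimate, and the paper itself is not especially careful about them, but they are secondary to the missing two-sided structure.
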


\begin{proof}
Following the framework described in Section \ref{framework}, consider the metric space $X = (\MM) \cup_h [0,1]$ equipped with the one-parameter family of distance metrics $D(t)$ given by (\ref{D metric}) as well as the family of Hausdorff measures on $X$ denoted $\mu(t)$. Note that for non-singular times $\mu(t)$ coincides with the usual Riemannian volume measure. \\

Take, as before, $\tau := T' - t$ and consider two weak diffusions $\nu_1(\tau)$ and $\nu_2(\tau)$ on $(X, D(t), \mu(t))$ as defined in the metric-measure sense by Definition \ref{weakdiffusion}.\\ 

From (\ref{W1 vs L1 dist}) we know that:
\be
	W_1 \left(\nu_1(\tau) , \nu_2(\tau), \tau \right) =  || F(r,\tau) - G(r,\tau) ||_{L^1(\R)}
\ee
where, $F(r,\tau)$ and $G(r,\tau)$ are the cumulative distribution functions of $ \left( \operatorname{pr}_r \right)_* \nu_1$ (computed from south pole)  and $ \left( \operatorname{pr}_r \right)_* \nu_2$ (computed from north pole) respectively. Since $\nu_1$ and $\nu_2$ have disjoint support, if we consider both distributions to be computed from south pole, then we have:
\begin{align}
	W_1 \left(\nu_1(\tau) , \nu_2(\tau), \tau \right) &= \int_\R   \left|  F(r,\tau)  -   G(r,\tau)  \right| \dr \\ &=  L(\tau) + \int_{0}^{\diam(M_1)}    F(r,\tau) \dr  +  \int_{0}^{\diam(M_2)}   G(r,\tau)  \dr.  
\end{align}
Therefore, to have a weak super solution of the Ricci flow as characterized in Theorem \ref{thm-McCannTopping}, we must have
\be
\label{W1estimate}
	\frac{\partial}{\partial \tau} W_1 \left(\nu_1(\tau) , \nu_2(\tau),\tau \right) = \frac{\partial L}{\partial \tau} + \frac{\partial}{\partial \tau} \int_{0}^{\diam(M_1)}   F(r,\tau) \dr + \frac{\partial}{\partial \tau}  \int_{0}^{\diam(M_2)}  G(r,\tau)  \dr \le 0
\ee
which implies
\be
	\frac{\partial L}{\partial \tau} \le - \frac{\partial}{\partial \tau} \int_{0}^{\diam(M_1)}   F(r,\tau) \dr - \frac{\partial}{\partial \tau}  \int_{0}^{\diam(M_2)}    G(r,\tau)  \dr . 
\ee

However, by Propostion \ref{diambound}, we can find two separate diffusions $\nu_1(\tau)$ and $\nu_2(\tau)$ on $\sphere^{n+1}$ which begin as Dirac distributions on $M_1$ and $M_2$ (resp.); i.e. 
\be
	\lim_{\tau \to T'-T} \nu_i(x,\tau) = \delta_{p_i}, \quad \text{ for } p_i \in M_i.
\ee
Therefore, $\supp(\nu_i(T'-T)) \subset M_i$ and thus, by Lemma \ref{supp lemma}, $\supp(\nu_i(\tau)) \subset M_i$, for all $\tau < T'-T$. As computed in Proposition \ref{diambound}, it follows that for any $M>0$,
\[
\min\left(\frac{\partial}{\partial \tau} \int_{0}^{\diam(M_1)}  F(r,\tau) \dr,   ~~\frac{\partial}{\partial \tau}  \int_{0}^{\diam(M_2)}  G(r,\tau) \dr  \right) >M.
\]
Thus, the only way for (\ref{W1estimate}) to hold is if $\frac{\partial L}{\partial \tau} < -2M$ for any real number $M$. Or, equivalently, that $\frac{\partial L}{\partial t}$ is unbounded from below as $t \nearrow T$. However, this would imply that the diameter is also unbounded in the flow and we arrive at our contradiction. Thus, the neckpinch singularity that arises at time $t = T$ must occur only at a single point; i.e. a hypersurface $\{x_0\} \times \sphere^{n}$, for some $x_0 \in (-1,1)$.\\
 \end{proof}

\subsection{The Asymmetric Case}

There does not yet exist a rigorous forward evolution out of a general (not necessarily rotationally symmetric) neckpinch singularity. However, the formal matched asymptotics found in \cite[Section 3]{AK2} predict that such evolutions should exist.\\

As we have seen in the previous section, the main idea in proving the one-point pinching phenomenon using the McCann-Topping's theory is to use the infinite propagation speed of heat type equations and the fact that heat does not travel along intervals. These facts do not require any symmetry at all. This suggests that our techniques can be used to estimate the size of the neckpinch singularity for a general neckpinch singularity by simply taking diffusions that are approaching $\delta$-Dirac measures at the singular time.

\bibliographystyle{plain}
\bibliography{reference2013}

\end{document}